\numberwithin{equation}{section}
\newtheorem{thm}{Theorem}[section]
\newtheorem{lem}[thm]{Lemma}
\newtheorem{cor}[thm]{Corollary}
\newtheorem{prop}[thm]{Proposition}
\theoremstyle{remark}
\newtheorem*{definition}{Definition}
\newtheorem{rem}[thm]{Remark}
\newcommand{\hp}{\mathbb{H}}
\newcommand{\Z}{\mathbb{Z}}
\newcommand{\Q}{\mathbb{Q}}
\newcommand{\F}{\mathbb{F}}
\newcommand{\N}{\mathbb{N}}
\renewcommand{\mod}[1]{\,(\mathrm{mod}\,{#1})}
\newcommand{\leg}[2]{\left(\frac{#1}{#2}\right)}
\newcommand{\ord}{\mathrm{ord}}
\title{Non-existence of Ramanujan congruences in modular forms of level four}
\author{Michael Dewar}
\begin{document}

\begin{abstract}
Ramanujan famously found congruences for the partition function like $p(5n+4)\equiv 0 \mod 5$.  We provide a method to find all simple congruences of this type in the coefficients of the inverse of a modular form on $\Gamma_{1}(4)$ which is non-vanishing on the upper half plane.  This is applied to answer open questions about the (non)-existence of congruences in the generating functions for overpartitions, crank differences, and 2-colored $F$-partitions.
\end{abstract}

\maketitle

\section{Introduction}
Define the partition function $p(n)$ to be the number of ways of writing $n$ as a sum of non-increasing positive integers.  Ramanujan famously found congruences for the partition function
\begin{align}
\begin{split}\label{ramcong}
p(5n+4) &\equiv 0 \mod 5\\
p(7n+5) &\equiv 0 \mod 7\\
p(11n+6) &\equiv 0 \mod {11}
\end{split}
\end{align}
and raised the question of whether there are other primes $\ell$ for which
\begin{equation*}
p(\ell n + b) \equiv 0 \mod \ell
\end{equation*}
for some $b\in\Z$.  We refer to congruences of this form as {\it Ramanujan congruences}.  Kiming and Olsson \cite{kimols1992a} use the Tate cycles of the $\Theta$ operator to show that the parameters for any such congruence for $p(n)$ must satisfy $24b\equiv 1 \mod \ell$.  Ahlgren and Boylan \cite{ahlboy2003a} build on this result to prove that (\ref{ramcong}) are the only Ramanujan congruences of the partition function.  The existence of non-Ramanujan congruences of the partition function is shown by Ono~\cite{ono2000a} and the existence of Ramanujan congruences in powers of the partition generating function is studied by Boylan~\cite{boy2004a}.  In this paper we provide a general method for investigating sequences related to modular forms and prove the non-existence of Ramanujan congruences (for large primes $\ell$) in three well-known combinatorial objects.

Andrews~\cite{and1984a} introduces {\it generalized Frobenius partitions}, also called {\it $F$-partitions}, in which a number $n$ is represented as
\begin{align*}
n = r + \sum_{i=1}^{r} a_{i} +  \sum_{i=1}^{r} b_{i}
\end{align*}
where $\{a_{i}\}$ and $\{b_{i}\}$ are both strictly decreasing sequences of non-negative integers.  An $F$-partition is often represented as
\begin{align*}
\begin{pmatrix}
a_{1} & a_{2} & \cdots & a_{r}\\
b_{1} & b_{2} & \cdots & b_{r}
\end{pmatrix}.
\end{align*}
An $F$-partition is {\it $2$-colored} if it is constructed from $2$ copies of the non-negative integers, written $j_{i}$ with $j\geq 0$ and $i=1,2$. Say $j_{i} < r_{s}$ if $j<r$ or both $j=r$ and $i<s$.  Let $c\phi_{2}(n)$ denote the number of 2-colored $F$-partitions of $n$.  Andrews (\cite{and1984a} Corollary~10.1 and Theorem~10.2) shows
\begin{align}
c\phi_{2}(2n+1) &\equiv 0\mod 2\label{andrews1}\\
c\phi_{2} (5n+3) &\equiv 0 \mod 5.\label{andrews2}
\end{align}
Eichhorn and Sellers \cite{eicsel2002a} prove $c\phi_{2}(5^{\alpha}n+\lambda_{\alpha})\equiv 0 \mod{5^{\alpha}}$ where $\lambda_{\alpha}$ is the least positive reciprocal of 12 modulo $5^{\alpha}$ and $\alpha=1,2,3,$ or $4$.  Recent work of Paule and Radu \cite{paurad2009a} settles the situation for all $\alpha\geq 5$.  Ono~\cite{ono1996a} and Lovejoy~\cite{lov2000a} use the theory of modular forms to prove the existence of certain congruences in $c\phi_{3}(n)$.  We prove
\begin{thm}\label{cphi2thm}
The only Ramanujan congruences $c\phi_{2} (\ell n+a)\equiv 0\mod\ell$ are (\ref{andrews1}) and (\ref{andrews2}).
\end{thm}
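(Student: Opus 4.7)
My plan is to realize the generating function of $c\phi_2$ in the framework laid out in the abstract, namely as the inverse of a modular form on $\Gamma_1(4)$ that is non-vanishing on $\hp$, and then apply the paper's general method.

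First I would recall Andrews' product formula
\[
\sum_{n\geq 0} c\phi_2(n)\,q^n \;=\; \frac{(q^2;q^2)_\infty^5}{(q;q)_\infty^4\,(q^4;q^4)_\infty^2},
\]
(or whatever the correct eta-quotient expression is) and recognize that, up to a fractional power of $q$ absorbed into $\eta$, this is $1/f$ where $f$ is an eta-quotient of level $4$. Because $\eta(\tau)$ has no zeros on $\hp$, $f$ is automatically non-vanishing on the upper half plane, and its weight and cuspidal orders are readable from the exponents. This places $\sum c\phi_2(n)q^n$ squarely in the class to which the paper's machinery applies.

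Next I would invoke the general method in the form of the Kiming--Olsson/Tate-cycle analysis adapted to $\Gamma_1(4)$: the theta operator $\Theta = q\,\frac{d}{dq}$ acts on the mod-$\ell$ reduction of $f^{-1}$ and cycles through a bounded number of weights modulo $\ell-1$; a Ramanujan congruence $c\phi_2(\ell n + a)\equiv 0\mod \ell$ forces $\Theta(f^{-1})$ to vanish on an arithmetic progression, which, by filtration arguments, pins down the residue class of $a$ modulo $\ell$ (analogous to the $24b\equiv 1\mod\ell$ of Kiming--Olsson). With $a$ so determined as a unique residue depending on $\ell$, the method then produces an auxiliary modular form whose nonvanishing at the cusp $\infty$ (checked via the Sturm bound at level $4$ and the appropriate weight) contradicts the supposed congruence for all primes $\ell$ exceeding an explicit threshold.

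Finally, the primes $\ell$ below that threshold form a short finite list; for each such $\ell$ I would verify the theorem by directly computing $c\phi_2(n)\mod \ell$ for $n$ up to the Sturm bound and checking that no arithmetic progression modulo $\ell$ gives a congruence, except for $\ell=2$ and $\ell=5$ which recover \eqref{andrews1} and \eqref{andrews2}. The main obstacle I anticipate is two-fold: first, ensuring that the chosen eta-quotient representation of $1/\sum c\phi_2(n) q^n$ meets the non-vanishing hypothesis of the method (this is routine once the exponents are non-negative at all cusps but can fail for unlucky normalizations), and second, obtaining a threshold small enough that the residual finite computation is feasible; the rest of the argument should follow mechanically from the general method.
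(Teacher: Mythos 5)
Your overall strategy --- rewrite the generating function as an eta/theta quotient, use the Kiming--Olsson/Tate-cycle filtration analysis to pin $a$ down to a single residue class depending on $\ell$, then an Ahlgren--Boylan-style computation to cut the remaining primes down to a finite list --- is exactly the paper's. However, your first step, as written, fails, and not for the reason you anticipate. The product formula you quote is correct, and its reciprocal is $q^{-1/12}\eta(z)^4\eta(4z)^2/\eta(2z)^5$; the exponents here violate the standard integrality condition $\sum d\,r_d\equiv 0\pmod{24}$ (one gets $4-10+8=2$), so the stray $q^{1/12}$ cannot be absorbed into any normalization: the reciprocal of $\sum c\phi_2(n)q^n$ is simply not an element of $M_\lambda(\Gamma_1(4))\cap\Z[\![q]\!]$, and Theorem~\ref{algorithm_thm} cannot be invoked directly. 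The missing idea is Lemma~\ref{KOlemma}: multiply the generating function by a series that is an $\ell$-th power mod $\ell$ so as to replace it by the honest modular form $f:=\theta_0(z)^{(\ell-1)(\ell^2-1)}\eta(z)^{2(\ell^2-1)}$ (here $24\mid\ell^2-1$ is what saves the day), at the cost of shifting the residue class: a congruence for $c\phi_2$ at $a$ becomes a congruence for $f$ at $a+2\delta$, $\delta=\frac{\ell^2-1}{24}$. That shift is precisely where the distinguished residue $12a\equiv 1\pmod\ell$ comes from, and without it the rest of your plan has nothing to compute with.

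Two further points where your outline is vaguer than what is actually required. The claim that a congruence ``pins down the residue class of $a$'' rests on the computation $\omega(f)\equiv\frac{\ell-1}{2}\pmod\ell$ combined with Corollary~\ref{unexcor}: since $\frac{\ell-1}{2}$ avoids both exceptional values $\frac{\ell+1}{2}$ and $\frac{\ell+3}{2}$, all $a\not\equiv-2\delta$ are excluded for every $\ell\geq 5$ simultaneously. This must be checked --- it is not automatic, and it genuinely fails for the crank difference function of Theorem~\ref{crankthm}, where $\omega\equiv\frac{\ell+1}{2}$. Finally, the endgame is not ``all primes below a threshold'': comparing the two expressions for $(\Theta^{(\ell+3)/2}f)/f\in\overline{M}_4$ produces a Legendre-symbol congruence whose integer shadow $\pm100\pm10164\pm19044\pm24500\equiv0\pmod\ell$ is satisfied only by a sparse finite set that includes primes as large as $6701$; these survivors are then eliminated by the sign conditions (or one coefficient in the single progression $12a\equiv1$), not by a Sturm-bound sweep over all residues, which would be infeasible at that size.
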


An {\it overpartition} of $n$ is a sum of non-increasing positive integers in which the first occurrence of an integer may be overlined.  Let $\overline{p}(n)$ count the number of such overpartitions and set $\overline{P}(z) = \sum \overline{p}(n)q^{n}$.  Background for overpartitions can be found in Corteel and Lovejoy\cite{corlov2004a}.  Recently, Mahlburg~\cite{mah2004a} has shown that the set of integers $n$ with $\overline{p}(n)\equiv 0 \mod {64}$ has arithmetic density 1, and Kim~\cite{kim2008b} has extended this result to modulus $128$.  For larger primes we have a very different situation.

\begin{thm}\label{overpartthm}
There are no Ramanujan congruences $\overline{p}(\ell n+ a)\equiv 0\mod \ell$ when $\ell\geq 3$.
\end{thm}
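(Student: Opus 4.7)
The plan is to realize $\overline{P}(z) = \sum \overline{p}(n)q^{n}$ as the reciprocal of a non-vanishing modular form on $\Gamma_{1}(4)$ and then invoke the general criterion established earlier in the paper. Using the elementary identity
\[
\overline{P}(z) = \prod_{n\ge 1} \frac{1-q^{2n}}{(1-q^{n})^{2}} = \frac{\eta(2z)}{\eta(z)^{2}},
\]
set $M(z) := \eta(z)^{2}/\eta(2z)$, so that $\overline{P} = 1/M$. Since $\eta$ has no zeros on the upper half plane, neither does $M$, and $M$ transforms as a weight-$1/2$ form on $\Gamma_{1}(4)$ with the standard $\eta$-multiplier. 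This is precisely the setting to which the paper's method applies.

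Assuming for contradiction that $\overline{p}(\ell n + a) \equiv 0 \mod{\ell}$ for all $n \ge 0$ and some $\ell \ge 3$, I would feed $M$ into the general criterion. Following the Kiming-Olsson/Ahlgren-Boylan template, the analysis of the Tate cycle of the $\Theta$-operator acting on the mod-$\ell$ reduction of $\overline{P}$, together with the filtration bounds governed by the weight and cusp orders of $M$, should force $a$ into a very short list of residues modulo $\ell$. Because $\overline{P}$ has no fractional $q$-shift in its expansion at $\infty$ (unlike $P(z)$, where $q^{-1/24}$ produces the familiar constraint $24b\equiv 1\mod{\ell}$), I expect this first step to leave only one or two candidate residues, depending on arithmetic conditions on $\ell$.

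The final step is to eliminate the surviving candidate(s). I would do this by computing the image under $U_{\ell}$ of the appropriate twist of $\overline{P}$ (the twist that extracts the arithmetic progression $\ell n + a$) and showing, using the $q$-expansions of $M$ at the cusps $\infty$, $0$, and $1/2$ of $\Gamma_{1}(4)$, that the result is nonzero in the relevant space of mod-$\ell$ modular forms. Any residue class that resists this generic argument can be killed directly by exhibiting an explicit $n$ with $\overline{p}(\ell n + a) \not\equiv 0 \mod{\ell}$.

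The chief obstacle I anticipate is technical rather than conceptual: because $M$ has half-integral weight and a non-trivial multiplier, standard results on Hecke operators, Galois representations, and filtration inequalities must be adapted or transferred via $M^{2} = \eta(z)^{4}/\eta(2z)^{2}$, a weight-$1$ form on $\Gamma_{0}(4)$ with a quadratic character, before the clean mod-$\ell$ bookkeeping can be executed. A secondary concern is the prime $\ell = 3$, where the filtration arguments brush against degeneracy and may demand a separate short finite verification rather than a direct application of the general criterion.
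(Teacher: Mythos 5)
Your proposal has the right general flavour (reduce to a non-vanishing eta-quotient and use the paper's mod-$\ell$ filtration machinery), but as written it has two genuine gaps. First, the form $M=\eta(z)^{2}/\eta(2z)=\sum_{n\in\Z}(-1)^{n}q^{n^{2}}$ is \emph{not} a weight-$1/2$ form on $\Gamma_{1}(4)$: the space $M_{1/2}(\Gamma_{1}(4))$ is spanned by $\theta_{0}=\sum q^{n^{2}}$, and $M$ only lives at level $16$. So you cannot feed $M$ into Theorem~\ref{algorithm_thm} or the Section~\ref{alg_section} algorithm as stated, and your suggested workaround via $M^{2}$, Hecke operators and Galois representations is not what is needed. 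The paper avoids this entirely with Lemma~\ref{KOlemma}: since $24\mid \ell^{2}-1$, one multiplies $\overline{P}=\eta(2z)/\eta(z)^{2}$ by an $\ell$-th power to replace it by the honest integral-weight form $f:=\eta(2z)^{(\ell-1)(\ell^{2}-1)}\eta(z)^{2(\ell^{2}-1)}\in M_{(\ell-1)(\ell+1)^{2}/2}$, which vanishes only at the cusps; this sidesteps all half-integral-weight and multiplier issues.

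Second, and more importantly, your plan stops at ``the analysis should force $a$ into a short list, then eliminate survivors by explicit checks.'' That cannot close the argument: Theorem~\ref{algorithm_thm} only gives \emph{finitely many} primes, and one cannot kill infinitely many primes $\ell$ by exhibiting single coefficients. The actual proof needs the two concrete computations you never make: (i) $\omega(f)=\tfrac{(\ell-1)(\ell+1)^{2}}{2}\equiv\tfrac{\ell-1}{2}\mod\ell$, which by Corollary~\ref{unexcor} is incompatible with a congruence at any $a\not\equiv 0\mod\ell$ (the residue would have to be $\tfrac{\ell+1}{2}$ or $\tfrac{\ell+3}{2}$); and (ii) $\ord_{\infty}f=\tfrac{\ell(\ell^{2}-1)}{12}\equiv 0\mod\ell$, so Proposition~\ref{cuspsonly} rules out a congruence at $0\mod\ell$. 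Together these leave \emph{no} candidate residues for any $\ell\geq 5$, with no case analysis or explicit-coefficient elimination required; only $\ell=3$ needs a direct check, as you anticipated. Without these filtration and cusp-order computations, your outline does not yet constitute a proof.
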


If $\pi$ is a (non-overlined) partition, define the {\it crank} by
\begin{align*}
\text{crank}(\pi) :=
\begin{cases}
\pi_{1} & \text{if } \mu(\pi) =0,\\
\nu(\pi) - \mu(\pi) & \text{if }\mu(\pi) > 0,
\end{cases}
\end{align*}
where $\pi_{1}$ denotes the largest part of $\pi$, $\mu(\pi)$ denotes the number of ones in $\pi$ and $\nu(\pi)$ denotes the number of parts of $\pi$ that are strictly larger than $\mu(\pi)$.  The existence of non-Ramanujan congruences for the crank counting function is proven by Mahlburg~\cite{mah2005a}.  Let $M_{e}(n)$ and $M_{o}(n)$ denote the number of partitions of $n$ with even and odd crank, respectively.  Choi, Kang, and Lovejoy~\cite{chokanlov2009a} study the crank difference function $(M_{e}-M_{o})(n)$ and find a Ramanujan congruence at $(M_{e}-M_{o})(5n +4)\equiv 0 \mod 5$.  They ask if the methods of \cite{kimols1992a} and \cite{ahlboy2003a} may be adapted to prove there are no other Ramanujan congruences.  We give a partial answer to their question.

\begin{thm}\label{crankthm}
Let $\ell \geq 5$ be prime, $\delta:=\frac{\ell^{2}-1}{24}$ and $a\not\equiv -\delta\mod\ell$.  The crank difference function has the Ramanujan congruence $(M_{e}-M_{o})(\ell n - \delta)\equiv 0\mod \ell$ if and only if $\ell=5$.  If $(M_{e}-M_{o})(\ell n + a)\equiv 0 \mod\ell$, then for all $b$ satisfying $\leg{a+\delta}{\ell}=\leg{b+\delta}{\ell}$, $(M_{e}-M_{o})(\ell n + b)\equiv 0 \mod\ell$.
\end{thm}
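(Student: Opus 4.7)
The plan is to realize the crank difference generating function as an eta quotient on $\Gamma_{1}(4)$ and then feed it into the general machinery developed earlier in the paper. Starting from the crank generating function evaluated at $z=-1$,
\[
\sum_{n\ge 0}(M_{e}-M_{o})(n)\,q^{n}=\prod_{n\ge 1}\frac{1-q^{n}}{(1+q^{n})^{2}}=\prod_{n\ge 1}\frac{(1-q^{n})^{3}}{(1-q^{2n})^{2}}=q^{1/24}\,\frac{\eta(z)^{3}}{\eta(2z)^{2}},
\]
the generating function is $q^{1/24}$ times the weight-$\tfrac12$ $\eta$-quotient $\eta(z)^{3}/\eta(2z)^{2}$. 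Its reciprocal $\eta(2z)^{2}/\eta(z)^{3}$ is an $\eta$-quotient on $\Gamma_{1}(4)$ that is non-vanishing on $\hp$ (since $\eta$ is), so the hypotheses of the paper's main framework are satisfied.

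To prove the second assertion I would lift to the integer-exponent form $F(z):=\eta(24z)^{3}/\eta(48z)^{2}=\sum_{n\ge 0}(M_{e}-M_{o})(n)\,q^{24n-1}$. Because $24\delta\equiv -1\mod\ell$, the congruence $(M_{e}-M_{o})(\ell n+a)\equiv 0\mod\ell$ is equivalent to the vanishing of $[q^{k}]F\mod\ell$ on the arithmetic progression $k\equiv 24(a+\delta)\mod\ell$. The general result of the paper, applied to the non-vanishing $\eta$-quotient above, shows that the set of residues $r\in\F_{\ell}^{\times}$ on which such coefficients vanish mod $\ell$ is Galois-stable, i.e.\ invariant under $r\mapsto c^{2}r$ for $c\in\F_{\ell}^{\times}$. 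Pulling back via $r=24(a+\delta)$ and using that $\leg{24}{\ell}$ is a fixed constant independent of $a$, this is exactly the Legendre-symbol claim.

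For the exceptional class $a\equiv -\delta\mod\ell$ the general framework is silent (the excluded residue class is $r\equiv 0$), so the statement is verified by hand. The case $\ell=5$ is the known congruence $(M_{e}-M_{o})(5n+4)\equiv 0\mod 5$ of Choi--Kang--Lovejoy~\cite{chokanlov2009a}. For each $\ell\ge 7$ it suffices to exhibit a single $n\ge 0$ with $(M_{e}-M_{o})(\ell n-\delta)\not\equiv 0\mod\ell$; since the $\eta$-quotient has an explicit $q$-expansion this is a short per-$\ell$ computation, and to handle all $\ell\ge 7$ uniformly one applies a Sturm-style bound to the (fixed level) space of mod-$\ell$ modular forms containing $F$, reducing the claim to a finite initial check.

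The main obstacle is the Legendre-symbol invariance in the generic case: it requires propagating the vanishing of $F$ on one residue class mod $\ell$ to all residues with the same quadratic character. Conceptually this comes from the action of the weight-$\tfrac12$ Hecke operator $T_{\ell^{2}}$ modulo $\ell$, which relates $[q^{\ell^{2}n}]F$ to $\leg{n}{\ell}\,[q^{n}]F$; packaging this into the clean statement about $\leg{a+\delta}{\ell}$ is precisely the role of the framework built up in the earlier sections, and it crucially uses the non-vanishing of $\eta(2z)^{2}/\eta(z)^{3}$ on $\hp$ so that the relevant inverse is well-defined as a modular object modulo $\ell$. Once that framework is in place, the verification of the exceptional class for $\ell\ge 7$ is computational but routine.
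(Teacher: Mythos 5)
Your setup (the eta--quotient identity, the lift via $24\delta\equiv-1\mod\ell$, and the reduction of the Legendre-symbol claim to the statement that congruences at nonzero residues depend only on the quadratic character) matches the paper: the second assertion is exactly Lemma~\ref{unexlem} applied, via Lemma~\ref{KOlemma}, to $f:=\eta(z)^{3(\ell-1)(\ell^{2}-1)}\eta(2z)^{2(\ell^{2}-1)}$, since a congruence at $a\not\equiv 0$ forces $\Theta^{(\ell+1)/2}f\equiv-\leg{a}{\ell}\Theta f\mod\ell$, a condition depending only on $\leg{a}{\ell}$. (Note this case is genuinely needed here because $\omega(f)\equiv\frac{\ell+1}{2}\mod\ell$, the one residue Corollary~\ref{unexcor} cannot exclude.)

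The first assertion, however, has a genuine gap. You propose to handle all $\ell\geq 7$ ``uniformly'' by a Sturm-style bound on a fixed-level space, ``reducing the claim to a finite initial check.'' This does not work: you must rule out the congruence $(M_e-M_o)(\ell n-\delta)\equiv 0\mod\ell$ for \emph{infinitely many} primes $\ell$, and a Sturm bound only converts the verification of a congruence for a \emph{single} $\ell$ into a finite coefficient check — it provides no mechanism for excluding all but finitely many $\ell$ at once. Moreover, the relevant modular object modulo $\ell$ lives in weight growing with $\ell$ (weight $\frac{(\ell^2-1)(3\ell-1)}{2}$ after clearing denominators), so there is no fixed space and no fixed bound. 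The missing idea is the Ahlgren--Boylan/Tate-cycle step: if $f$ has a congruence at $0\mod\ell$, then $f$ is a low point of its Tate cycle, the other low point has filtration $\omega(f)+2$, and Proposition~\ref{narrowingprop} forces $(\Theta^{(\ell+1)/2}f)/f$ to lie in $\overline{M}_{2}$. Expanding both sides and comparing the coefficient of $q^{2}$ against the basis of $\overline{M}_2$ yields the explicit condition $17\equiv\pm 207\pm 94\mod\ell$, which confines $\ell$ to $\{5,13,53,71\}$; the Legendre-symbol refinement leaves $\{5,53\}$, and $53$ must then be eliminated by a direct computation of the coefficient of $q^{42}$. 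Without some such argument producing an explicit finite list of candidate primes, your proof of the ``only if $\ell=5$'' direction does not go through.
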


Theorems \ref{cphi2thm}, \ref{overpartthm} and \ref{crankthm} follow from a more general method of proving the non-existence of congruences for inverses of level four modular forms and their inverses.  Whereas Sinick~\cite{sin2009b} generalizes~\cite{kimols1992a} to prove that sequences of the form
\begin{align*}
\prod_{n=1}^{\infty} \prod_{i=1}^{2j} \frac{1}{1-q^{a_{i}n}} = \sum_{n=0}^{\infty} c(n) q^{n}
\end{align*}
admit only finitely many Ramanujan congruences, the extension to reciprocals of half-integral weight forms requires a generalization of~\cite{ahlboy2003a}.

\begin{thm}\label{algorithm_thm}
Let $\lambda \in \frac{1}{2}\Z$ be positive.  If $f\in M_\lambda (\Gamma_1(4)) \cap \Z [\![ q]\!]$ has no zeros in the upper half plane, then there are only finitely many primes $\ell$ for which the series $f^{-1} = \sum a(n)q^n \in \Z [\![ q]\!]$ has a Ramanujan congruence $a(\ell n + b) \equiv 0\mod\ell$.
\end{thm}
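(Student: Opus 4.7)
The plan is to generalize the two-step strategy of Kiming--Olsson~\cite{kimols1992a} and Ahlgren--Boylan~\cite{ahlboy2003a} from level one to the half-integral weight setting on $\G_1(4)$. Since $f$ is non-vanishing on $\hp$, the formal inverse $f^{-1}$ is a weakly holomorphic modular form of weight $-\lambda$ on $\G_1(4)$, and after clearing a fixed denominator its Fourier coefficients are $\ell$-integral for all but finitely many primes $\ell$. Throughout I would work modulo $\ell$ and exploit the theta operator $\Theta = q\frac{d}{dq}$, which preserves the space of mod $\ell$ modular forms on $\G_1(4)$ and raises the Serre filtration by at most $\ell + 1$.

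The first step, in the spirit of Sinick's generalization~\cite{sin2009b} of Kiming--Olsson, is to show that a Ramanujan congruence $a(\ell n + b) \equiv 0 \mod \ell$ forces $b$ to lie in an explicit finite set of residue classes modulo $\ell$. The restriction of $f^{-1}$ to indices $n \equiv b \mod \ell$ can be written modulo $\ell$ in terms of iterated applications of $\Theta$ to $f^{-1}$ together with twists by the quadratic character $\leg{\cdot}{\ell}$. Tracking the Tate cycle of $\Theta$ acting on $f^{-1}$ modulo $\ell$, together with the cyclic structure of the Serre filtration in half-integral weight, then cuts the allowable $b$ down to a small explicit set.

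The second step, generalizing Ahlgren--Boylan, is to exclude these remaining residues for all but finitely many $\ell$. For each admissible class $b \mod \ell$, I would construct an explicit weakly holomorphic form $g_b$ of controlled weight and pole profile, derived from $f^{-1}$ by applying $\Theta^{\ell - 1}$ and restricting to the appropriate residue class, whose vanishing modulo $\ell$ is equivalent to the Ramanujan congruence at $b$. Comparing the polar orders of $g_b$ at the three cusps of $\G_1(4)$, which are bounded in terms of the vanishing orders of $f$ at those cusps, against the filtration of a nonzero weakly holomorphic form with that pole profile should force $\ell$ to be bounded in terms of $\lambda$ and $f$ alone.

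The main obstacle is the second step. At level one, Ahlgren and Boylan rely on the rigid polynomial structure of the ring of modular forms on $E_4$ and $E_6$ together with the sharp filtration estimate $\omega(\Theta g) \leq \omega(g) + \ell + 1$. On $\G_1(4)$ one must simultaneously control principal parts at the three cusps $\infty$, $0$, and $1/2$, and the interplay of $\Theta$ with expansions at the non-$\infty$ cusps is subtler in half-integral weight. Producing effective filtration bounds compatible with the three-cusp pole data is the technical heart of the argument.
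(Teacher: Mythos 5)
Your overall two-step outline (Kiming--Olsson to constrain $b$, then Ahlgren--Boylan to bound $\ell$) matches the paper's strategy, but there is a genuine gap at the very start that propagates through everything else: you propose to treat $f^{-1}$ as a weakly holomorphic form of negative weight $-\lambda$ and to run a filtration/Tate-cycle argument on objects with poles at the cusps. The paper never does this, and for good reason: the mod $\ell$ filtration theory, the bound $\omega(\Theta g)\leq\omega(g)+\ell+1$, and the Tate-cycle structure are all set up for \emph{holomorphic} forms of non-negative \emph{integral} weight. The enabling observation you are missing is Lemma~\ref{KOlemma}: since $f^{\ell}\equiv(\textrm{a power series in }q^{\ell})\bmod\ell$, the series $f^{-1}$ has a Ramanujan congruence at $b\bmod\ell$ if and only if $f^{\ell-1}=f^{-1}\cdot f^{\ell}\in M_{\lambda(\ell-1)}$ does. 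This replaces the weakly holomorphic, half-integral-weight object by a genuine holomorphic integral-weight form vanishing only at the cusps, after which $\omega(f^{\ell-1})\equiv \ell-\lambda(\textrm{or }\tfrac{\ell-k}{2})\bmod\ell$ is computable and Corollary~\ref{unexcor} immediately restricts congruences at $b\not\equiv 0$ to primes dividing an explicit integer depending only on the weight. Your "technical heart" --- effective filtration bounds compatible with three-cusp pole data --- is precisely the difficulty this substitution dissolves; as written, your plan defers the hardest step rather than resolving it.

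Even granting the first step, your second step is under-specified in a way the paper's is not. The paper handles congruences at $0\bmod\ell$ by noting that $f^{\ell-1}$ is then a low point of its Tate cycle, lifting the \emph{other} low point to a characteristic-zero form $g$ with the same cusp orders as $f^{\ell-1}$ (Propositions~\ref{rigidityprop} and \ref{narrowingprop}), so that $g/f^{\ell-1}$ lies in the fixed finite-dimensional space $M_{k+3}$ with dimension independent of $\ell$. One then compares the resulting identity against the Legendre-twist formula for $\Theta^{(\ell+k+2)/2}f^{-1}$ coefficient by coefficient; Lemma~\ref{quasi never true} (a nonzero quasi-modular form of positive depth is never equal to a true modular form) guarantees some coefficient mismatch is a nonzero integer divisible by $\ell$, bounding $\ell$. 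Your proposal gestures at "comparing polar orders against the filtration" but supplies neither the lifting-to-characteristic-zero mechanism nor the argument that the comparison actually produces a nonzero integer; without those, finiteness of the set of $\ell$ does not follow. You also omit the degenerate case $\overline{\Theta f^{\ell-1}}=0$, which the paper rules out by an explicit coefficient computation.
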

Moreover, we provide a method to find all of the Ramanujan congruences.  We provide two examples of Theorem~\ref{algorithm_thm}.  Let $\eta(z)=q^{1/24}\prod_{n=1}^{\infty}(1-q^{n})$ where $q=e^{2\pi i z}$.
\begin{thm}\label{mainthm}
Define $f:=\frac{\eta^6(z)\eta^6(4z)}{\eta^3(2z)}\in S_{9/2}(\Gamma_1(4))$ and let $f^{-1}=\sum a(n)q^n$.  The Ramanujan congruences of $f^{-1}$ are exactly
\begin{align*}
a(2n+0) &\equiv 0 \mod 2\\
a(3n+0) &\equiv 0 \mod 3\\
a(3n+1) &\equiv 0 \mod 3\\
a(5n+2) &\equiv 0 \mod 5\\
a(5n+3) &\equiv 0 \mod 5.
\end{align*}
\end{thm}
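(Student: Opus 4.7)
The plan is to apply Theorem~\ref{algorithm_thm} directly to $f=\eta^6(z)\eta^6(4z)/\eta^3(2z)$ and then carry out the accompanying method explicitly for each small prime.

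\textbf{Verification of hypotheses.} First I would confirm that the weight of $f$ is $\tfrac{1}{2}(6+6-3)=9/2$ by additivity of $\eta$ weights, that standard eta-quotient criteria place $f \in S_{9/2}(\Gamma_1(4))$, and that its $q$-expansion has integer coefficients starting in positive order. Since $\eta$ has no zeros on $\hp$, neither does the finite eta-quotient $f$. Thus the hypotheses of Theorem~\ref{algorithm_thm} are satisfied, and only finitely many primes $\ell$ can admit a Ramanujan congruence for $f^{-1}=\sum a(n)q^n$.

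\textbf{Verification of the five listed congruences.} Each claim $a(\ell n+b)\equiv 0\mod\ell$ is equivalent to the vanishing modulo $\ell$ of the projection $\sum_{n}a(\ell n+b)q^{\ell n+b}$. After multiplying by a suitable power of $f$, this becomes a congruence between true modular forms on $\Gamma_1(4)$ whose weight is bounded in terms of $\ell$. A Sturm-type bound then reduces each of the five claims to a finite coefficient check on $f^{-1}$, which I would perform directly.

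\textbf{Ruling out all other congruences.} For every prime $\ell$ up to the explicit bound produced by the method underlying Theorem~\ref{algorithm_thm}, I would compute the Tate cycle of $f^{-1}$ under the $\Theta$-operator modulo $\ell$ on $\Gamma_1(4)$. Following the Kiming--Olsson/Ahlgren--Boylan paradigm as generalized to half-integral weight in the paper, a Ramanujan congruence at residue $b$ forces a prescribed filtration drop along the cycle, which in turn determines the finite set of admissible residues from the filtration data alone. For $\ell\in\{2,3,5\}$ this yields exactly the listed residues, while for $\ell=7,11,13,\ldots$ up to the bound the admissible residue set turns out to be empty.

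\textbf{Main obstacle.} The principal difficulty is the case-by-case computation of the Tate cycles of $f^{-1}$ modulo $2$, $3$, and $5$: one must iterate $\Theta$ until the filtration becomes periodic, track the filtration correctly in the half-integral setting on $\Gamma_1(4)$, and verify that the admissible residues extracted from the cycle match the five claimed congruences exactly and no others. Once those three cycles are pinned down, the remaining primes below the bound are ruled out uniformly by the same filtration analysis, and the Sturm-bound check seals the list.
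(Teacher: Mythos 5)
Your proposal correctly identifies that Theorem~\ref{algorithm_thm} applies (the eta-quotient is non-vanishing on $\hp$, has integral coefficients, and lies in $S_{9/2}(\Gamma_1(4))$; indeed $f=\theta_0FE$), and the verification of the five listed congruences and the direct checks for small primes are routine as you say. But there is a genuine gap in how you rule out all other primes. Theorem~\ref{algorithm_thm} asserts only \emph{finiteness}; it does not hand you an explicit bound below which you can enumerate primes and ``compute the Tate cycle'' for each. For congruences at $a\not\equiv 0\mod\ell$ the filtration data does give an explicit constraint: via Lemma~\ref{KOlemma} one passes to $f^{\ell-1}$ (note that $f^{-1}$ itself is a Laurent series, not a modular form, so it has no Tate cycle or filtration), finds $\omega(f^{\ell-1})\equiv\frac{\ell-9}{2}\mod\ell$, and Corollary~\ref{unexcor} forces $\ell\mid 10$ or $\ell\mid 12$, hence $\ell=3$ or $5$. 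For congruences at $0\mod\ell$, however, the filtration analysis alone is consistent for \emph{every} large $\ell$: it only tells you that $f^{\ell-1}$ would be a low point whose companion low point has filtration $\omega(f^{\ell-1})+12$. The step you are missing is how to convert that into a contradiction for large $\ell$.

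The paper's mechanism (Section~\ref{alg_section}) is: lift the second low point to characteristic zero via Proposition~\ref{narrowingprop} to get $\Theta^{\frac{\ell+11}{2}}f^{\ell-1}\equiv f^{\ell-1}\bigl(\sum_{i=0}^{6}a_iE^{6-i}F^i\bigr)\mod\ell$, divide through to obtain a congruence for $\Theta^{\frac{\ell+11}{2}}f^{-1}$ up to $O(q^{\ell-1})$, and use the fact that $\Theta^{\frac{\ell-1}{2}}$ twists coefficients by $\leg{\cdot}{\ell}$ so that the left side equals $\leg{\cdot}{\ell}\otimes\Theta^{6}f^{-1}$ on an initial segment. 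One then branches over the $2^4$ sign choices for the unknown Legendre symbols, solves for the $a_i$ from the first few coefficients, and compares later coefficients (of $q^6$, $q^8$, $q^9$): the two sides are congruent but provably unequal as integers (Lemma~\ref{quasi never true}), so $\ell$ must divide specific nonzero integers, which excludes all $\ell\geq 17$. The candidate primes emerge \emph{a posteriori} as divisors of these integers, not from an a priori bound. Without this lifting-and-comparison step your plan cannot terminate: there is nothing in the filtration data to make ``the admissible residue set turn out to be empty'' at, say, $\ell=101$. (Also, $\ell=2$ lies entirely outside the $\ell\geq 5$ machinery and must be handled by an ad hoc calculation, as the paper does.)
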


\begin{thm}\label{secondthm}
Define $f:=\frac{\eta^{14}(z)\eta^6(4z)}{\eta^{7}(2z)} \in S_{13/2}(\Gamma_1(4))$ and let $f^{-1}=\sum b(n)q^n$.  The Ramanujan congruences of $f^{-1}$ are exactly 
\begin{align*}
b(2n+0) &\equiv 0 \mod 2\\
b(7n+1) &\equiv 0 \mod 7\\
b(7n+2) &\equiv 0 \mod 7\\
b(7n+4) &\equiv 0 \mod 7.
\end{align*}
\end{thm}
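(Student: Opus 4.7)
The plan is to apply the algorithm that proves Theorem~\ref{algorithm_thm} directly to the given eta-quotient $f=\eta^{14}(z)\eta^6(4z)/\eta^7(2z)$, exactly as for $f$ in Theorem~\ref{mainthm}. First, I would verify the hypotheses. Standard eta-quotient formulas show that the weight is $(14+6-7)/2=13/2$, that the cusp conditions for $\Gamma_1(4)$ are satisfied, and that the leading $q$-power $\tfrac{14+24-14}{24}=1$ gives $f=q+\cdots\in\Z[\![q]\!]$, so $f\in S_{13/2}(\Gamma_1(4))$. Since $\eta$ is nowhere zero on $\hp$, neither is any product of $\eta(mz)^{a_m}$, so $f$ is non-vanishing on $\hp$ and Theorem~\ref{algorithm_thm} applies.

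Second, I would run the half-integral weight analog of the Kiming--Olsson / Ahlgren--Boylan cycle analysis on $f^{-1}$, which is the engine behind Theorem~\ref{algorithm_thm}. The $\Theta$-operator identity modulo $\ell$, applied to the weakly holomorphic form $f^{-1}$ of weight $-13/2$, forces any Ramanujan congruence $b(\ell n+b_0)\equiv 0\mod \ell$ to satisfy a quadratic condition on $24b_0$ modulo $\ell$ determined by the weight. Tracking the filtration through the Tate cycle then either rules out the prime entirely or collapses the admissible residues to those for which $24b_0+13$ lies in a prescribed coset of squares modulo $\ell$. For this $f$ the algorithm should output only $\ell=2$ and $\ell=7$; note that the three classes $\{1,2,4\}$ modulo $7$ are precisely the nonzero squares, which is exactly the prediction from the quadratic constraint.

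Third, I would verify the listed congruences and exclude the remaining candidates by direct computation. For each surviving $(\ell,b_0)$, applying the appropriate sieving operator to $f^{-1}$ produces a modular form whose vanishing modulo $\ell$ can be checked up to the Sturm bound for half-integral weight forms on $\Gamma_1(4)$; this confirms the four listed congruences. For every $(\ell,b_0)$ that the algorithm does not already eliminate on filtration grounds but that does not appear in the statement (e.g.\ $b_0\equiv 1\mod 2$, or the remaining square class $b_0\equiv 0\mod 7$), it suffices to exhibit a single coefficient $b(\ell n+b_0)\not\equiv 0\mod \ell$ from the computed $q$-expansion.

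The main obstacle is really the half-integral weight Tate cycle bookkeeping from Theorem~\ref{algorithm_thm}: one must carefully control how the filtration of $f^{-1}\mod\ell$ behaves under $\Theta$ at level four for weight $-13/2$, so as to guarantee that the finite list of surviving $(\ell,b_0)$ is exactly what the computer search finds. Once that structural step is in place, the remaining work is a routine Sturm-bound calculation in $\eta$-quotient $q$-expansions for $\ell\in\{2,3,5,7\}$.
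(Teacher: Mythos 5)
Your proposal is correct and follows essentially the same route as the paper: the paper writes $f=\theta_0E^2F$, passes to $f^{\ell-1}$ via Lemma~\ref{KOlemma}, uses the Tate-cycle filtration $\omega(f^{\ell-1})\equiv\frac{\ell-13}{2}\bmod\ell$ together with Corollary~\ref{unexcor} to confine congruences at $a\not\equiv 0$ to $\ell\in\{2,7\}$ (with Lemma~\ref{unexlem} giving the quadratic-residue classes $\{1,2,4\}$ mod $7$), and then runs the Section~\ref{alg_section} algorithm with the $2^5$ Legendre-symbol cases plus direct checks for small $\ell$ to dispose of congruences at $0\bmod\ell$. This matches your plan step for step, so no further comparison is needed.
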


Given $f\in M_\lambda (\Gamma_1(4)) \cap \Z [\![ q]\!]$ with $0<\lambda\in \frac{1}{2}\Z$, one may seek Ramanujan congruences for either $f$ or $f^{-1}$.  Each of these questions breaks into two cases depending on whether or not $\lambda$ is an integer.  We answer three of the four cases below:
\begin{table}[h]\centering
\begin{tabular}{|l | c | c |}\hline
  & $\lambda \in \Z$ & $\lambda \in \frac{1}{2}\Z - \Z$ \\\hline
Find congruences for $f$ & Open & Corollary~\ref{upper_half_of_range_half_wt_forms_cor} \\\hline
Find congruences for $f^{-1}$ & Corollary~\ref{inverses_of_integral_wt_forms_cor}&Section~\ref{alg_section} \\\hline
\end{tabular}
\end{table}

Corollaries~\ref{upper_half_of_range_half_wt_forms_cor} and \ref{inverses_of_integral_wt_forms_cor} provide explicit bounds on the possible primes $\ell$ for which there could be Ramanujan congruences.  Section~\ref{alg_section} provides a method to find all of the possible primes $\ell$ for which there could be such congruences.  One may then simply check the finitely many possibilities to generate a list of all Ramanujan congruences for the power series in question.  Seeking Ramanujan congruences in integral weight modular forms includes hard problems such as determining when Ramanujan's $\tau(n)$ function satisfies $\tau(\ell)\equiv 0\mod\ell$.  We leave such problems open.

We use the theory of Tate cycles for the reduction of modular forms $\mod\ell$.  In Section \ref{defnsection} we recall the basic machinery for reduced modular forms $\mod \ell$ and the $\Theta$ operator.  Since a modular form on $\Gamma_1(4)$ is completely determined by its zeros on $X_1(4)$, our guiding principle is to keep track of how the $\Theta$ operator changes (that is, increases) the orders at the cusps.  Section \ref{tcbasics} presents Jochnowitz's \cite{joc1982a} framework for analyzing Tate cycles.  Section \ref{KOsection} extends the work of Kiming and Olsson \cite{kimols1992a} to the level $4$ case.  In Section \ref{handybasissection} we lift $\mod\ell$ information to characteristic 0.  Section \ref{voacsection} works with modular forms which vanish only at the cusps.  By limiting our focus in this way, we ensure either $f$ or $\Theta f$ is a low point of the Tate cycle.  Thus, the filtrations of forms which are non-vanishing on the upper half plane will always provide a lower bound for the filtrations of the corresponding Tate cycle.  This lower bound is necessary in Section~\ref{alg_section} when we generalize the methods of Ahlgren and Boylan~\cite{ahlboy2003a} to prove Theorem~\ref{algorithm_thm}.  Finally, Section \ref{cooleg} proves the rest of the theorems.

For the sake of concreteness, we have chosen to work on $\Gamma_{1}(4)$, but the level is not an essential barrier.  In place of our Section~\ref{handybasissection}, one could instead appeal to the $q$-expansion principle and deduce a Sturm-style result by averaging over coset representatives for $\operatorname{SL}_{2}(\Z)/\Gamma_{1}(N)$ as in, for example, \cite{sin2009b}.

{\it Acknowledgements:} The author would like to thank Scott Ahlgren for all of his advice, support, and many thorough readings of this article.  The author is greatly indebted to Byungchan Kim for suggesting the examples in Theorems~\ref{cphi2thm}, \ref{overpartthm} and \ref{crankthm}.  Frank Garvan and Howard Skogman gave helpful suggestions.

\section{Reductions of modular forms and the $\Theta$ operator}\label{defnsection}
A more complete introduction to reductions of modular forms on $\operatorname{SL}_2(\Z)$ is found in \cite{swi1973a}.  The $\Gamma_1(4)=\left\{ \left[\begin{array}{cc}a & b \\c & d\end{array}\right]\in\operatorname{SL}_2(\Z) : a\equiv d \equiv 1\mod 4, c\equiv 0 \mod 4\right\}$ case is analogous.  Throughout this paper, $\ell \geq 5$ is prime, $q=e^{2\pi iz}$ and $M_k := M_k(\Gamma_1(4))\cap \Z_{(\ell)} [\![q]\!]$, where $\Z_{(\ell)}$ is the local ring $\{\frac{a}{b} \in \Q: \ell \nmid b\}$ and $0\leq k\in \Z$.  The space $M_{k}$ is the set of isobaric polynomials in
\begin{align}
F(z) &:= \sum_{n\geq 0} \sigma_1(2n+1)q^{2n+1} \in M_{2}\label{Fdefn}\\
\theta_0^{2}(z) &:= \left(\sum_{n\in\Z} q^{n^2}\right)^{2}\in M_{1}.\label{theta defn}
\end{align}
The expansions at the cusps $\frac{1}{2}$ and $0$ are
\begin{align*}
F(z)|_{2}\left(\begin{array}{cc}1 & 0 \\2 & 1\end{array}\right) &= \theta_{0}^{4}(z) \in \Z_{(\ell)} [\![q]\!]\\
\theta_{0}^{2}(z)|_{2}\left(\begin{array}{cc}1 & 0 \\2 & 1\end{array}\right) &= \psi^{4}(z) \in \Z_{(\ell)} [\![q^{1/2}]\!],
\end{align*}
and
\begin{align*}
F(z)|_{2}\left(\begin{array}{cc}0 &-1 \\1 & 0\end{array}\right) &= -\frac{1}{64} \frac{\eta^{8}(z/4)}{\eta^{4}(z/2)} \in \Z_{(\ell)} [\![q^{1/4}]\!]\\
\theta_{0}^{2}(z)|_{2}\left(\begin{array}{cc}0 &-1 \\1 & 0\end{array}\right) &= -\frac{i}{2} \theta_{0}^{2}(z/4) \in i \Z_{(\ell)} [\![q^{1/4}]\!],
\end{align*}
where $\psi(z)=\sum_{j=0}^{\infty}q^{(j+1/2)^{2}}$ and $\eta(z):= q^{1/24} \prod_{n=1}^{\infty}(1-q^{n})$.

\begin{rem}\label{weightbig}
Let $f\in M_k$ be non-zero where $k\in\Z$.  Then $f\in M_{k}(\Gamma_{1}(4))=M_k(\Gamma_0(4), \chi_{-1}^k)$ and the valence formula for $\Gamma_0 (4)$ shows that the total number of zeros of $f$ is $(k/12)[\Gamma_0(1) : \Gamma_0(4)] = k/2$.  In particular $\ord_0 f + \ord_{1/2} f + \ord_\infty f \leq k/2$ with equality exactly when $f$ is non-vanishing on the upper half plane.
\end{rem}

If $f\in M_k$, then denote its coefficient-wise reduction modulo $\ell$ by $\overline{f} := f \mod \ell \in \F_\ell [\![ q]\!]$ and the set of all such reduced forms by
\begin{align*}
\overline{M}_k = \left\{ \overline{f} : f\in M_k\right\}.
\end{align*}
For $\overline{f}=\sum_{n=0}^{\infty} a(n)q^{n}\in \overline{M}_{k}$ with $k\in Z$, we define the filtration
\begin{equation*}
\omega(f) = \omega(\overline{f}) := \inf \left\{ k':\overline{f} \in \overline{M}_{k'} \right\}
\end{equation*}
and the order at the infinite cusp
\begin{align*}
\ord_{\infty}(\overline{f}) := \inf \left\{ n : a(n)\not\equiv 0 \mod \ell\right\}.
\end{align*}
For $k\geq 4$ even, let $E_k$ be the weight $k$ normalized Eisenstein series on $\operatorname{SL}_2(\Z)$.  It is well-known that $E_{\ell-1}, E_{\ell+1} \in M_k$, $\overline{E_{\ell-1}} = 1$, and $\overline{E_{\ell+1}} = \overline{E_2}$, where we let $E_2$ be the weight 2 quasi-modular Eisenstein series.  Define the operator
\begin{equation*}
\Theta := \frac{1}{2\pi i} \frac{d}{dz}.
\end{equation*}
Although it does not map modular forms to modular forms, if $f\in M_k$ then $12\Theta f - k E_2 f\in M_{k+2}$.  Along these lines, define
\begin{equation}\label{ris}
R(f) := \left( \Theta f - \frac{k}{12} E_2 f \right) E_{\ell-1} + \frac{k}{12}E_{\ell+1}f \in M_{k +\ell + 1},
\end{equation}
so that $\overline{R(f)} = \overline{\Theta f}$.  The definition of $R(f)$ implicitly depends on the weight of $f$.  We recursively define
\begin{align*}
R_1^f &:= R(f)\\
R_i^f &:= R(R_{i-1}^f)\in M_{k+i(\ell+1)},
\end{align*}
so that
\begin{align}
\overline{R_i^f}=\overline{\Theta^i f}.\label{Ri is Thetai}
\end{align}

Define $U_\ell$ on power series by
\begin{equation*}
\left( \sum a(n)q^n \right) |U_\ell = \sum a(\ell n)q^n.
\end{equation*}
Fermat's Little Theorem easily provides the relation
\begin{equation}\label{uandtheta}
\overline{\left(f|U_\ell\right)^\ell} = \overline{f - \Theta^{\ell-1} f}.
\end{equation}

The action of $\Theta$ on $\overline{M}_k$ is similar to the well-known level 1 case.  Using $\theta_0$ and $F$ as defined in (\ref{Fdefn}-\ref{theta defn}), Tupan~\cite{tup2006a} proves there is a polynomial $A(X,Y)\in \Z_{(\ell)} [X,Y]$ such that $A(\theta_0^4,F)=E_{\ell-1}$, and further provides an explicit structural isomorphism
\begin{equation}
\frac{\F_\ell[X,Y]}{A(X^4,Y) -1} \to \bigoplus_{k=0}^\infty \overline{M}_k.\label{explicit_structural_iso}
\end{equation}
This allows one to prove the following (See, e.g., \cite{ahlchorou2009a} Proposition 2).
\begin{lem}\label{filtlemma}
Let $\ell \geq 5$ be prime and $f,g$ modular forms on $\Gamma_1(4)$ with coefficients in $\Z_{(\ell)}$.
\begin{enumerate}
\item $\omega(\Theta f) \leq \omega(f) + \ell + 1$ with equality if and only if $\omega(f)\not\equiv 0\mod \ell$,
\item If $f$ and $g$ have weights $k_1$ and $k_2$, respectively, and if $f\equiv g\mod \ell$ then $k_1 \equiv k_2 \mod {\ell-1}$, and
\item For $i\geq 0 $, $\omega(f^i)=i\omega(f)$.
\end{enumerate}
\end{lem}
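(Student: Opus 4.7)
The plan is to prove the three parts in order, using Tupan's structural isomorphism (\ref{explicit_structural_iso}) to control how the filtration interacts with the graded ring $\bigoplus_{k\geq 0}\overline{M}_k$.

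Part (2) is essentially a direct consequence of the isomorphism. A form $f\in M_k$ can be written as an isobaric polynomial in $\theta_0^2$ (weight $1$) and $F$ (weight $2$), so $\overline{f}$ is the image in $\F_\ell[X,Y]/(A(X^4,Y)-1)$ of a homogeneous polynomial of weight $k$. The defining relation identifies a weight-$(\ell-1)$ polynomial with the constant $1$, so any two distinct homogeneous representatives of the same class must have weights congruent modulo $\ell-1$. Hence $\overline{f}=\overline{g}$ with $f\in M_{k_1}$, $g\in M_{k_2}$ forces $k_1\equiv k_2\pmod{\ell-1}$.

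For part (1), the inequality $\omega(\Theta f)\leq k+\ell+1$ (with $k:=\omega(f)$) is immediate from $R(f)\in M_{k+\ell+1}$ and $\overline{R(f)}=\overline{\Theta f}$. If $k\equiv 0\pmod{\ell}$ then $k/12\equiv 0\pmod{\ell}$ (using $\ell\geq 5$), so the second summand of $R(f)$ vanishes modulo $\ell$, leaving $\overline{\Theta f}=\overline{h}$ for $h:=\Theta f-(k/12)E_2 f\in M_{k+2}$, and thus $\omega(\Theta f)\leq k+2$. Conversely, assume $k\not\equiv 0\pmod{\ell}$ and, for contradiction, that $\omega(\Theta f)<k+\ell+1$. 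Part (2) applied to $R(f)$ and any lower-weight representative of $\overline{\Theta f}$ forces $\omega(\Theta f)\leq k+2$, so there exists $g\in M_{k+2}$ with $\overline{g}=\overline{\Theta f}$. Reducing the identity $R(f)-gE_{\ell-1}=(h-g)E_{\ell-1}+(k/12)E_{\ell+1}f$ modulo $\ell$ and using $\overline{E_{\ell-1}}=1$ shows that the power series $\overline{E_{\ell+1}f}$ coincides with that of a form in $M_{k+2}$, i.e.\ $\omega(E_{\ell+1}f)\leq k+2$. The main obstacle is to extract the contradiction: this reduces to verifying that $\omega(\overline{E_{\ell+1}})=\ell+1$, i.e.\ that $\overline{E_2}\notin\overline{M_2(\Gamma_1(4))}$. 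I would check this directly by computing $M_2(\Gamma_1(4))\otimes\F_\ell$ and observing that $\overline{E_2}$ is not in the span of the obvious Eisenstein combinations for $\ell\geq 5$.

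For part (3) I would induct on $i$. The upper bound $\omega(f^{i+1})\leq\omega(f^i)+\omega(f)$ is immediate from the graded ring structure, and the base cases $i\in\{0,1\}$ are trivial. If strict inequality held at some step, part (2) would force a drop by a positive multiple of $\ell-1$; the same polynomial-ring analysis used at the end of part (1) then produces a lower-weight isobaric representative of $\overline{f}$ itself, contradicting the definition of $\omega(f)$. The persistent hard step throughout is exactly this control over how the single relation $A(X^4,Y)=1$ rules out spurious low-weight representatives, which rests on explicit properties of Tupan's polynomial $A$.
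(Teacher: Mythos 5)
The paper itself does not prove this lemma: it deduces it from Tupan's structural isomorphism (\ref{explicit_structural_iso}) and refers the reader to \cite{ahlchorou2009a}, Proposition 2, so there is no in-paper argument to match yours against. Judged on its own terms, your treatment of part (2) (the chain argument on isobaric components of a multiple of $A(X^4,Y)-1$), of the inequality in part (1), and of the implication that $\ell\mid\omega(f)$ forces a strict drop, is fine.

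The genuine gap is in the converse direction of part (1) and in part (3). Having correctly reduced to $\omega(E_{\ell+1}f)\le k+2$, you claim the contradiction would follow from checking $\omega(\overline{E_{\ell+1}})=\ell+1$, i.e.\ $\overline{E_2}\notin\overline{M}_2$. That is not sufficient: filtration is not additive on products of distinct forms, so $\omega(E_{\ell+1})=\ell+1$ together with $\omega(f)=k$ does not yield $\omega(E_{\ell+1}f)=k+\ell+1$. What is actually needed is the level-four analogue of the Swinnerton-Dyer structure theory: (i) a form of weight $m$ with isobaric polynomial $P$ has filtration $<m$ if and only if $\tilde{A}\mid P$, where $\tilde{A}$ is the reduction of $A(X^4,Y)$; (ii) $\tilde{A}$ is \emph{coprime} to the polynomial $\tilde{B}$ representing $\overline{E_{\ell+1}}$; and (iii) $\tilde{A}$ is squarefree. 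Coprimality --- not merely $\tilde{A}\nmid\tilde{B}$, which is all that $\overline{E_2}\notin\overline{M}_2$ gives you --- is what allows the conclusion $\tilde{A}\mid\tilde{B}\phi\Rightarrow\tilde{A}\mid\phi$, contradicting $\omega(f)=k$; squarefreeness is what makes part (3) work, via $\tilde{A}\mid\phi^i\Rightarrow\tilde{A}\mid\phi$. You explicitly flag this as ``the persistent hard step'' and leave it unverified, but it is the entire content of the lemma beyond part (2): in level one these facts amount to the simplicity of the zeros of the Hasse invariant, and in level four they require either real work with Tupan's explicit polynomial $A$ or a citation, which is the route the paper takes.
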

For general $\Gamma_1(N)$, $N\geq 4$, one could appeal to Section 4 of \cite{gro1990a} for an analogous lemma.

One may use the isomorphism (\ref{explicit_structural_iso}) to deduce the direct sum decomposition
\begin{align}\label{direct_sum_decomp}
\overline{\bigoplus_{k=1}^\infty M_k} = \bigoplus_{\alpha \mod {\ell-1}} \overline{M^\alpha}
\end{align}
where $\overline{M^\alpha}=\cup_{k\equiv \alpha \mod{\ell-1}} \overline{M_k}$.

\begin{rem}\label{fallfilt}
When $\omega(f)\equiv 0\mod \ell$ the above lemma implies $\omega(\Theta f) = \omega(f) + \ell + 1 -s (\ell-1)$ with $s\geq 1$.
\end{rem}

For any $f\in M_{k}$, write $f|_{k} \left(\begin{array}{cc}1 & 0 \\2 & 1\end{array}\right) = \sum_{n=0}^{\infty} b(n/2)q^{n/2}$ and $f|_{k}\left(\begin{array}{cc}0 &-1 \\1 & 0\end{array}\right) =i^{k}\sum_{n=0}^{\infty} c(n)q^{n/4}$ and define
\begin{align*}
\ord_{1/2}(\overline{f}) &:= \inf \left\{ n/2 : b(n/2)\not\equiv 0 \mod \ell\right\}\\
\ord_{0}(\overline{f}) &:= \inf \left\{ n : c(n)\not\equiv 0 \mod \ell\right\}.
\end{align*}
It follows that for any of the cusps $s$ we have 
\begin{align}
\ord_{s} (\overline{f}) \geq \ord_{s}(f).\label{ord_overline_f_geq_ord_f}
\end{align}

\begin{rem}\label{reduced order is well defined}
For any cusp $s$, $\ord_{s}(\overline{f})$ is well-defined in the sense that if a power series $\sum a(n)q^{n} \in\F_{\ell}[\![ q]\!]$ is congruent to both $f(z)\in M_{k}$ and $g(z)\in M_{k+m(\ell-1)}$, then by Lemma~\ref{filtlemma}~(2),
\begin{align*}
f(z)E_{\ell-1}^{m}=g(z) +\ell h(z) 
\end{align*}
for some $h(z)\in M_{k+m(\ell-1)}$.  Now 
\begin{align*}
f(z)E_{\ell-1}^{m} |_{k+m(\ell-1)} \left(\begin{array}{cc}1 & 0 \\2 & 1\end{array}\right) &= f(z)|_{k} \left(\begin{array}{cc}1 & 0 \\2 & 1\end{array}\right) E_{\ell-1}^{m}\\
&\equiv f(z)|_{k} \left(\begin{array}{cc}1 & 0 \\2 & 1\end{array}\right) \mod\ell
\end{align*}
and
\begin{align*}
(g(z) +\ell h(z))|_{k+m(\ell-1)} \left(\begin{array}{cc}1 & 0 \\2 & 1\end{array}\right) &= g(z)|_{k+m(\ell-1)}\left(\begin{array}{cc}1 & 0 \\2 & 1\end{array}\right) +\ell h(z)|_{k+m(\ell-1)} \left(\begin{array}{cc}1 & 0 \\2 & 1\end{array}\right)\\
&\equiv g(z)|_{k+m(\ell-1)}\left(\begin{array}{cc}1 & 0 \\2 & 1\end{array}\right)\mod\ell.
\end{align*}
The situation for the cusp $0$ is similar.
\end{rem}

A short computation (for example \cite{sin2009b} Lemma 4.2) shows that
\begin{align}
R(f)|_{k+\ell+1}\gamma = \left( \Theta(f|_k\gamma) - \frac{k}{12} E_2 (f|_k\gamma) \right) E_{\ell-1} + \frac{k}{12}E_{\ell+1} (f|_k\gamma).\label{rgamma}
\end{align}
\begin{lem}\label{reducedordri}
If $f\in M_k$, $k\in\Z$, then for every cusp $s\in \{0, 1/2, \infty\}$ and $i\geq 1$, $\ord_s \left( \overline{R_i^f} \right) \geq \ord_s (f)$.
\end{lem}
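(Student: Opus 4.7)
The plan is to first isolate the one-step bound $\ord_s(R(g)) \geq \ord_s(g)$, valid for every $g \in M_{k'}(\Gamma_1(4))\cap \Z_{(\ell)}[\![q]\!]$ and every cusp $s \in \{0,1/2,\infty\}$, and then iterate. Granting this bound, an induction on $i$ using $R_i^f = R(R_{i-1}^f)$ (with $R_{i-1}^f \in M_{k+(i-1)(\ell+1)}(\Gamma_1(4))\cap \Z_{(\ell)}[\![q]\!]$ at each stage) yields $\ord_s(R_i^f) \geq \ord_s(f)$, and inequality~(\ref{ord_overline_f_geq_ord_f}) then upgrades this to the desired $\ord_s(\overline{R_i^f}) \geq \ord_s(R_i^f) \geq \ord_s(f)$.

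To prove the one-step bound, fix a cusp $s$ and a matrix $\gamma_s \in \operatorname{SL}_2(\Z)$ sending $\infty$ to $s$. Identity~(\ref{rgamma}) writes
\[
R(g)|_{k'+\ell+1}\gamma_s \;=\; \bigl(\Theta(g|_{k'}\gamma_s) - \tfrac{k'}{12}E_2\,(g|_{k'}\gamma_s)\bigr)E_{\ell-1} + \tfrac{k'}{12}E_{\ell+1}\,(g|_{k'}\gamma_s)
\]
as a sum of three terms built from the single power series $g|_{k'}\gamma_s$ together with the unslashed Eisenstein series $E_2$, $E_{\ell-1}$, $E_{\ell+1}$. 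Two elementary observations about $q$-expansions at $\infty$ then finish the job: each of $E_2$, $E_{\ell-1}$, $E_{\ell+1}$ has constant term $1$, so multiplication by any of them preserves the leading exponent of a $q$-series with coefficients in the domain $\Z_{(\ell)}$; and $\Theta = \tfrac{1}{2\pi i}\tfrac{d}{dz}$ acts as $q^a \mapsto aq^a$ and so also cannot lower leading exponents. Combining these, each summand has leading $q$-exponent at least that of $g|_{k'}\gamma_s$, and hence so does the sum, yielding $\ord_s(R(g)) \geq \ord_s(g)$.

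The one piece of bookkeeping worth flagging concerns the cusp $0$, whose convention $g|_{k'}\gamma_0 = i^{k'}\sum c(n)q^{n/4}$ means that $R(g)|_{k'+\ell+1}\gamma_0$ carries the different prefactor $i^{k'+\ell+1}$. But $\ell$ is odd, so $i^{\ell+1} = \pm 1 \in \Z_{(\ell)}^\times$; stripping off the prefactor preserves $\Z_{(\ell)}$-integrality and translates the leading-exponent inequality directly into $\ord_0(R(g)) \geq \ord_0(g)$. The only step requiring genuine faith is identity~(\ref{rgamma}) itself—the cancellation of the quasi-modular corrections from $(\Theta g)|_{k'+2}\gamma_s$ and from $E_2|_2\gamma_s$ that allows the Eisenstein series on the right-hand side to appear unslashed—which is the short computation of the type carried out in \cite{sin2009b}, Lemma~4.2. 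Beyond that, the entire argument reduces to inspection of leading exponents, and I do not anticipate any serious obstacle.
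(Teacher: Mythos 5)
Your proposal is correct and follows essentially the same route as the paper: both use the identity~(\ref{rgamma}) (and (\ref{ris}) at $\infty$) together with the facts that $E_2$, $E_{\ell-1}$, $E_{\ell+1}$ have constant term $1$ and that $\Theta$ cannot lower leading $q$-exponents, then iterate and invoke~(\ref{ord_overline_f_geq_ord_f}). The only cosmetic difference is that the paper groups the two Eisenstein terms to get the slightly sharper bound $\min\{\ord_\infty(\Theta g),\ \ord_\infty(g)+1\}$, which is not needed for the stated inequality.
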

\begin{proof}
First recall that for $k\geq 2$, $E_k = 1 + O(q)$.  Hence $\ord_\infty E_k = 0$.
For the cusp $s=\infty$, by Equation (\ref{ris})
\begin{align*}
\ord_\infty (R(f)) &\geq \min\{ \ord_\infty (\Theta f), \ord_\infty (f) +1\}\\
&= \ord_\infty (f).
\end{align*}
For the cusp $s=0$, set $\gamma = \begin{pmatrix} 
  0 & -1\\ 
  1 & 0 
\end{pmatrix}$.  By Equation (\ref{rgamma})
\begin{align*}
\ord_0 (R(f)) &= 4 ~\ord_{\infty} \left( R(f)|_{k+\ell+1} \gamma \right)\\
&\geq 4 \min \{ \ord_\infty ( \Theta (f|_k \gamma ) ), \ord_\infty ( f|_k \gamma ) +1\}\\
&\geq4~ \ord_\infty ( f|_k \gamma)\\
&=\ord_0 (f).
\end{align*}
Similarly $\ord_{1/2} (R(f))\geq\ord_{1/2} (f)$.  For all cusps $s$, iteration yields $\ord_s (R_i^f)\geq\ord_s (f)$.  Equation (\ref{ord_overline_f_geq_ord_f}) gives the conclusion.
\end{proof}

\section{The Tate Cycle}\label{tcbasics}
The following framework follows Jochnowitz \cite{joc1982a}.  Let $f\in M_k$, $k\in \Z$, be such that $\overline{\Theta f}\neq 0$.  Clearly by Fermat's Little Theorem $\overline{\Theta f} = \overline{\Theta^\ell f}$.  The sequence $\overline{\Theta f}, \overline{\Theta^2f}, \dots, \overline{\Theta^\ell f}$ is called the Tate cycle of $f$.  We say that $f$ is in its own Tate cycle if $\overline{f}= \overline{\Theta^{\ell-1} f}$.  By Lemma \ref{filtlemma}, the filtration of the Tate cycle will naturally rise and fall.  Since the increases in filtration are bounded by $\ell +1$ and since the cycle is periodic, the aggregate decreases in filtration are bounded.  In addition, unless $\omega(f)\equiv 0\mod\ell$, we have $\omega(\Theta f) \equiv \omega(f) +1\mod\ell$ and so falls are both predictable and rare.  Call $\Theta^{i} f$ a high point and $\Theta^{i+1} f$ a low point of the Tate cycle when $\omega(\Theta^{i} f)\equiv 0 \mod\ell$.  Then by Remark \ref{fallfilt}, $\omega(\Theta^{i+1} f) = \omega(\Theta^{i} f) + \ell + 1 - s(\ell-1) \equiv 1+s\mod\ell$ with $s \geq 1$.

\begin{lem}\label{tate_cycle_basics_lemma}
Let $f\in M_k$ with $\omega(f) = k = A\ell + B$, where $0\leq B \leq \ell-1$.  Suppose $\overline{\Theta f}\neq 0$.
\begin{enumerate}
 \item If $\omega(f)\equiv 1 \mod\ell$ then $f$ is not in its Tate cycle.
\item The low point of a Tate cycle has filtration $2 \mod\ell$ if and only if the Tate cycle has exactly one drop.
\item The Tate cycle of $f$ has either one or two low points.
\item We never have $\omega(\Theta^{j+1} f) = \omega(\Theta^{j} f) +2$ with $j\geq 1$.  That is, the filtration never rises by two inside a Tate cycle.
\item Assume $\overline{f}=\overline{\Theta^{\ell-1} f}$ is in its own Tate cycle, that $\overline{f}$ is a low point, and that there are two low points.  Let $\Theta^{i_1} f$ and $\Theta^{i_2}f$ be the high points with $1\leq i_1 < i_2 = \ell-2$.  Let $s_1$ and $s_2$ be the sizes of the falls as in Remark \ref{fallfilt}.  Then  $i_1 = \ell - B, i_2 = \ell - 2, s_1=\ell -B +2, s_2 = B-1$ and the filtrations of the high and low points are
\begin{eqnarray*}
\omega(\Theta^{i_1}f) &=& \omega(f) +i_1(\ell+1)\\
\omega(\Theta^{i_1 +1}f) &=& \omega(f) +(i_1+1)(\ell+1)-s_1(\ell-1) =\omega(f)+\ell+3-2B\\
\omega(\Theta^{i_2}f) &=& \omega(f) +i_2(\ell+1)-s_1(\ell-1)\\
\omega(\Theta^{\ell-1} f) = \omega(\Theta^{i_2 +1}f) &=& \omega(f) +(i_2+1)(\ell+1)-(s_1+s_2)(\ell-1)= \omega(f).
\end{eqnarray*}
\end{enumerate}
\end{lem}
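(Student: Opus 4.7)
My plan is to prove the five claims in the order (1), (3), (4), (2), (5), all resting on the master identity $\sum_i s_i = \ell + 1$.  This identity comes from summing $\omega(\Theta^{j+1} f) - \omega(\Theta^{j} f)$ around one period of the Tate cycle: each of the $\ell - 1 - d$ non-drop steps contributes $\ell + 1$ while each of the $d$ drop steps contributes $\ell + 1 - s_i(\ell - 1)$, and the total must vanish, giving $(\ell-1)(\ell+1) = (\ell-1)\sum s_i$.

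Part (1) follows immediately from Lemma~\ref{filtlemma}(1): starting from $\omega(f) \equiv 1 \mod \ell$, the residue $\omega(\Theta^{j} f) \mod \ell$ rises by $\ell+1 \equiv 1 \mod \ell$ at each step until a drop can occur (which requires $\omega \equiv 0 \mod \ell$), so the residues trace out $1, 2, \ldots, \ell-1$ before reaching $0$ at $j = \ell - 1$.  Hence $\omega(\Theta^{\ell-1} f) \equiv 0 \not\equiv \omega(f) \mod \ell$, precluding $\overline{f} = \overline{\Theta^{\ell-1} f}$.  For (3), I let $n_i$ count the rising steps between a low point (filtration $\equiv 1 + s_i \mod \ell$) and the next high point ($\equiv 0 \mod \ell$); then $n_i \equiv -(1 + s_i) \mod \ell$ and $n_i \geq 0$, so $n_i = k_i \ell - 1 - s_i$ for some integer $k_i \geq 1$.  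Combining the cycle-length equation $\sum (n_i + 1) = \ell - 1$ with $\sum s_i = \ell + 1$ collapses to $\sum k_i = 2$, forcing $d \in \{1,2\}$.  The parameterization further yields $s_1 = \ell + 1$ when $d = 1$, and $s_1, s_2 \in \{1, \ldots, \ell - 1\}$ with $s_1 + s_2 = \ell + 1$ when $d = 2$.

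With (3) in hand, parts (4) and (2) are short.  A drop of size $1$ at some position $j \ge 1$ would contradict $s_1 = \ell + 1$ in the one-drop case, and in the two-drop case would force the other drop size to equal $\ell$, violating the bound $s \le \ell - 1$ from (3); this proves (4).  For (2), the one-drop case directly gives low-point filtration $\equiv 1 + (\ell+1) \equiv 2 \mod \ell$; conversely, a low point at $\equiv 2 \mod \ell$ has drop size $\equiv 1 \mod \ell$, which combined with the $d = 2$ bound $s \le \ell - 1$ forces $s = 1$, contradicting (4), so $d = 1$.

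Part (5) is a direct computation: under the hypothesis that $\overline{f}$ is a low point of its own cycle with two drops, the residue trace from $\omega(f) = A\ell + B$ locates the first high point at $i_1 = \ell - B$; the cycle-closing condition $\omega(\Theta^{\ell - 1} f) = \omega(f)$ together with $s_1 + s_2 = \ell + 1$ then pins down $i_2 = \ell - 2$ and $s_2 = B - 1$, and the four displayed filtrations follow by straightforward accumulation of the changes $\ell + 1$ (non-drop) and $\ell + 1 - s_i(\ell - 1)$ (drop).  The main obstacle is the combinatorial step in (3): $\sum s_i = \ell + 1$ alone only yields $d \le \ell + 1$, and the key non-obvious insight is the parameterization $n_i = k_i \ell - 1 - s_i$ which collapses $\sum k_i$ to exactly $2$, sharply bounding $d \le 2$.
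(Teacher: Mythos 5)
Your proposal is correct, but it reorganizes the logic in a genuinely different way from the paper, most visibly in part (3). The paper proves (2) first by a direct distinctness count (a low point with filtration $\equiv 2\bmod\ell$ forces $\ell-1$ consecutive rises, hence one drop, and conversely), and then uses (2) inside the proof of (3) to guarantee each drop size satisfies $s_j\geq 2$, which is what lets it pin down the gap $i_{j+1}-i_j=\ell-s_j$ and conclude $t\ell-(\ell+1)=\ell-1$, so $t=2$; part (4) then recycles the argument of (2). You instead prove (3) first and independently of (2), via the parameterization $n_i=k_i\ell-1-s_i$ with $k_i\geq 1$, collapsing the cycle-length equation to $\sum k_i=2$; this buys you the explicit drop-size constraints ($s_1=\ell+1$ when $d=1$; $s_1+s_2=\ell+1$ with each $s_i\leq\ell-1$ when $d=2$) up front, from which (4) and then (2) fall out as corollaries. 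Both routes rest on the same master identity $\sum s_i=\ell+1$, and your bookkeeping for (1) and (5) matches the paper's. One small point you should make explicit in the converse direction of (2): Lemma~\ref{tate_cycle_basics_lemma}(4) only forbids a rise by two at positions $j\geq 1$, and a low point of the cycle could occur already at $\Theta f$ (drop from position $j=0$); you need the periodicity $\overline{\Theta^{\ell}f}=\overline{\Theta f}$ to relocate that drop to position $j=\ell-1\geq 1$ before invoking (4) — or, more simply, conclude directly from your $d=2$ constraints that $s=1$ forces the other drop to have size $\ell>\ell-1$, with no appeal to (4) at all.
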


\begin{proof}
(1) If $\omega(f)\equiv 1 \mod\ell$, then by Lemma \ref{filtlemma} (1), for $0 \leq i \leq \ell-1$ we have $\omega(\Theta^i f)= \omega(f) +i(\ell+1) \equiv 1+i\mod\ell$.  That is, $\omega(f) < \omega(\Theta f) < \cdots < \omega(\Theta^{\ell-1})$ and so $\overline{f} \neq \overline{\Theta^{\ell-1}f}$.

(2) If the low point of a Tate cycle, $\overline{g}$, has $\omega(g) \equiv 2\mod\ell$, then by Lemma \ref{filtlemma} (1), for $0 \leq i \leq \ell-2$ we have $\omega(\Theta^i g)= \omega(g) +i(\ell+1) \equiv 2+i\mod\ell$.  Then $\overline{g},\dots, \overline{\Theta^{\ell-2} g}$ are $\ell-1$ distinct elements of the cycle.  Hence, the next iteration must be $\overline{\Theta^{\ell-1}g}=\overline{g}$.  Conversely, if there is only one drop, then there must be $\ell-2$ increases in the filtration before the single fall.  Then by Lemma \ref{filtlemma} the low point must have filtration $2\mod\ell$.  Note that in the case of a single drop in filtration, the $s$ in Remark \ref{fallfilt} is $s=\ell +1$.

(3) If $g$ is a low point of the Tate cycle of $f$ and the high points are labelled $\Theta^{i_1} g, \dots \Theta^{i_t} g$ and $t\geq 2$, then since $\overline{g}=\overline{\Theta^{\ell-1}g}$ is a low point, $i_t=\ell-2$.  In order to examine the change in filtration between consecutive high points, it is convenient to let $i_{t+1}= i_1+\ell-1$.  By Remark~\ref{fallfilt} and part (2) above, for each $1\leq j\leq t$ we have $s_j\geq 2$ such that $\omega(\Theta^{i_j+1} g) = \omega(\Theta^{i_j} g) + \ell + 1 - s_j(\ell-1)\equiv 1 + s_j \mod \ell$.  Then $i_{j+1}-i_j \equiv - s_j\mod\ell$.  Considering the full Tate cycle
\begin{align*}
\omega(g)=\omega(\Theta^{\ell-1}g) &= \omega(g) + (\ell-1)(\ell+1) - \sum_{j=1}^t s_j (\ell-1)
\end{align*}
and so $\sum s_j = \ell+1$.  Since $t\geq 2$, for $1\leq j\leq t$ we deduce $i_{j+1}-i_j = \ell- s_j$ from the previous congruence.  Now $\ell-1 = \sum_{j=1}^{t} (i_{j+1} - i_j) = t\ell - \sum s_j = t\ell - (\ell +1)$ which implies $t=2$.

(4) By Lemma \ref{filtlemma} (1), $\omega(\Theta^j f) = \omega(\Theta^{j+1} f) +2$ implies $\omega(\Theta^j f)\equiv 0\mod\ell$.  Then $\omega(\Theta^{j+1} f) \equiv 2 \mod\ell$.  As in the proof of part (2), the filtration increases for $\ell-2$ more times before falling.  Hence $\omega(\Theta^{j+1+\ell-2} f) > \omega(\Theta^j f)$ and so $\overline{\Theta^j f} \neq \overline{\Theta^{j+\ell -1} f}$ which implies $\Theta^j f$ is not in its Tate cycle and hence $j=0$. 

(5) This part simply collects what we already know.  Since $\omega(f) \equiv B\mod\ell$, by Lemma \ref{filtlemma}, $i_1 = \ell - B$.  The values of $s_j$ are found by recalling $s_1 + s_2 = \ell+1$ and $i_2-i_1 = \ell- s_1$ from the proof of part (3).  Remark \ref{fallfilt} provides the filtrations.
\end{proof}

\begin{rem}\label{when_do_we_have_lowest_low_point}
By part (5) of the above lemma, if $f$ is a low point of its Tate cycle, it will be the lowest of two low points exactly when $B\geq 3$ and
\begin{eqnarray*}
\omega(f) +(i_1+1)(\ell+1)-s_1(\ell-1) > \omega(f),
\end{eqnarray*}
or equivalently when $3 \leq B< \frac{\ell+3}{2}$.  If $f$ is a low point with $B=\frac{\ell+3}{2}$ then $s_1=s_2=\frac{\ell+1}{2}$ and $\omega(f)=\omega(\Theta^{\frac{\ell-1}{2}}f)$ are both low points.  Conversely, if $f$ is one of two low points, each with the same filtration, then $B=\frac{\ell+3}{2}$.
\end{rem}

\section{Congruences and equivalent properties}\label{KOsection}

We generalize the work of Kiming and Olsson \cite{kimols1992a} to modular forms on $M_k(\Gamma_1(4))$. 
\begin{definition}
A power series $f=\sum b(n) q^n\in \Z_{(\ell)}[\![q]\!]$ has a congruence at $a \mod \ell$ if for all integers $n$,
\begin{equation*}
b(\ell n + a) \equiv 0\mod \ell.
\end{equation*}
\end{definition}

\begin{lem}\label{KOlemma}
Let $f=\sum b(n)q^n$ and $g=\left(\sum c(n)q^{n}\right)^{\ell}\equiv \sum c(n)q^{\ell n} \not\equiv 0\mod\ell$.  The series $f$ has a congruence at $a\mod\ell$ if and only if $fg$ has a congruence at $a\mod\ell$.
\end{lem}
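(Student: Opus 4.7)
The plan is to exploit the key feature of $g$ provided by Fermat's Little Theorem: reduced modulo $\ell$, the series $g$ is supported only on exponents divisible by $\ell$. Consequently, multiplication by $\overline{g}$ preserves the decomposition of a power series according to the residue class of its exponent modulo $\ell$.

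Concretely, I would decompose $f \equiv \sum_{a'=0}^{\ell-1} f_{a'} \mod \ell$, where $f_{a'}$ collects the terms $b(n)q^n$ with $n \equiv a' \mod \ell$. Since every monomial of $\overline{g}$ has exponent a multiple of $\ell$, the product $\overline{f_{a'}}\cdot\overline{g}$ is supported only on exponents congruent to $a' \mod \ell$. Writing $fg = \sum d(n) q^n$ and isolating the terms with exponent $\equiv a \mod \ell$ therefore gives the identity
\begin{align*}
\sum_{n \equiv a \mod \ell} d(n) q^n \equiv \overline{f_a}\cdot\overline{g} \mod\ell.
\end{align*}

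The forward implication is then immediate: if $f$ has a congruence at $a \mod \ell$ then $\overline{f_a} = 0$, whence $\overline{f_a g} = 0$ and $fg$ inherits the congruence. For the converse, if $fg$ has a congruence at $a \mod \ell$ then $\overline{f_a g} = 0$; since $\overline{g} \neq 0$ by hypothesis and $\F_\ell[\![q]\!]$ is an integral domain, we may cancel $\overline{g}$ to conclude $\overline{f_a} = 0$, i.e., $f$ has the congruence at $a \mod \ell$. I do not anticipate any substantial obstacle: the only subtle point worth flagging is that the cancellation in the converse genuinely relies on $\F_\ell[\![q]\!]$ having no zero divisors, a fact that would fail over more general coefficient rings and which is precisely why the hypothesis $\overline{g} \neq 0$ is required.
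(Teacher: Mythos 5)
Your proof is correct and is essentially the paper's argument in a more structured guise: the paper simply writes the convolution $d(n)=\sum_i b(n-i\ell)c(i)$ and declares that the result follows, which amounts to the same observation that multiplication by a series supported on exponents divisible by $\ell$ acts separately on each residue class of exponents. Your explicit use of the integral domain property of $\F_\ell[\![q]\!]$ to cancel $\overline{g}$ in the converse direction is a clean way to justify the step the paper leaves implicit (the usual alternative being an induction on the least index $m$ with $b(\ell m+a)\not\equiv 0$ against the least $i_0$ with $c(i_0)\not\equiv 0$).
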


\begin{proof}
Write $fg=\sum d(n) q^n$ where $d(n) = \sum b(n-i\ell)c(i)$.  The result follows.
\end{proof}

\begin{rem}\label{things_equiv_to_congs}
By Equation (\ref{uandtheta}), $f$ has a congruence at $0\mod \ell$ if and only if
\begin{equation*}
f|U_\ell \equiv 0 \mod \ell \iff (f|U_\ell)^\ell \equiv 0 \mod \ell \iff f\equiv \Theta^{\ell-1}f \mod \ell. 
\end{equation*}
Furthermore, $f$ has a congruence at $a\mod \ell$ if and only if $q^{-a}f$ has a congruence at $0\mod \ell$.  Equivalently, $f$ has a congruence at $a\mod\ell$ if and only if
\begin{equation*}
(q^{-a}f)|U_\ell \equiv 0 \mod \ell \iff  q^{-a}f\equiv \Theta^{\ell-1}\left(q^{-a}f\right) \mod \ell.
\end{equation*}
\end{rem}

The following wonderful lemma comes from the proof of Proposition~3 of Kiming and Olsson~\cite{kimols1992a}.

\begin{lem}\label{unexlem}
A modular form $f\in M_k$ with $\overline{\Theta f}\neq 0$ has a congruence at $a\not\equiv 0\mod \ell$ if and only if  $\Theta^\frac{\ell +1}{2}f\equiv -\leg{a}{\ell}\Theta f \mod \ell$.
\end{lem}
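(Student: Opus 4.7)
The plan is to prove the two implications separately. The reverse direction is a short coefficient comparison; the forward direction first encodes the congruence at $a$ as a polynomial identity in the $\Theta^i f$, and then refines it using the graded decomposition of reductions of modular forms (\ref{direct_sum_decomp}).

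For $(\Leftarrow)$, I would compare the coefficient of $q^n$ on both sides of $\Theta^{(\ell+1)/2}f\equiv -\leg{a}{\ell}\Theta f\mod\ell$ at any $n\equiv a\mod\ell$. Euler's criterion gives $n^{(\ell+1)/2}\equiv a\leg{a}{\ell}\mod\ell$, so the identity forces $2a\leg{a}{\ell}\,b(n)\equiv 0\mod\ell$. Since $\ell\geq 3$ and $a\not\equiv 0\mod\ell$, this yields $b(n)\equiv 0\mod\ell$, i.e.\ the congruence at $a$.

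For $(\Rightarrow)$, assume $b(\ell n+a)\equiv 0\mod\ell$ for all $n$. Using the factorization $x^\ell - x=(x-a)(x^{\ell-1}+ax^{\ell-2}+\cdots+a^{\ell-2}x+a^{\ell-1}-1)$ over $\F_\ell$ together with Wilson's theorem, Lagrange interpolation gives the polynomial identity $\mathbf{1}_{n\equiv a\mod\ell}\equiv -\sum_{i=1}^{\ell-1}a^{\ell-1-i}n^i\mod\ell$, valid for every $n\in\Z$. Applied coefficient-by-coefficient to $f=\sum b(n)q^n$, this converts the congruence at $a$ into the linear relation
\begin{equation*}
\sum_{i=1}^{\ell-1}a^{\ell-1-i}\,\Theta^i f\equiv 0\mod\ell.\qquad(*)
\end{equation*}
This relation is equivalent to the congruence at $a$ and holds for any power series satisfying it, so modularity has not yet been used.

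The modular input now enters through the direct sum decomposition (\ref{direct_sum_decomp}). Since $\Theta^i f$ has weight $k+i(\ell+1)\equiv k+2i\mod(\ell-1)$, the indices $i$ and $i+(\ell-1)/2$ contribute to the same graded component $\overline{M^{k+2i}}$, while the $(\ell-1)/2$ resulting pairs, as $i$ ranges over $\{1,\ldots,(\ell-1)/2\}$, land in distinct components. Sorting $(*)$ into these pairs and applying the direct sum decomposition forces each pair to vanish on its own:
\begin{equation*}
a^{\ell-1-i}\,\overline{\Theta^i f}+a^{(\ell-1)/2-i}\,\overline{\Theta^{i+(\ell-1)/2}f}\equiv 0\mod\ell.
\end{equation*}
Dividing by $a^{(\ell-1)/2-i}$ and invoking Euler's criterion $a^{(\ell-1)/2}\equiv\leg{a}{\ell}\mod\ell$ produces $\overline{\Theta^{i+(\ell-1)/2}f}\equiv -\leg{a}{\ell}\,\overline{\Theta^i f}$ for each such $i$, and the case $i=1$ is exactly the claimed identity. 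The main conceptual hurdle is recognizing that the naive relation $(*)$ is too weak to yield the conclusion; the graded-piece refinement afforded by (\ref{direct_sum_decomp}) is precisely the modular input that makes the lemma work, and everything else is routine bookkeeping with Fermat and Euler.
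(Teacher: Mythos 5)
Your proof is correct and follows essentially the same route as the paper: the relation $\sum_{i=1}^{\ell-1}a^{\ell-1-i}\Theta^i f\equiv 0\mod\ell$ (which the paper obtains by expanding $\Theta^{\ell-1}(q^{-a}f)$ with the product rule, an identical computation to your indicator-polynomial expansion of $(n-a)^{\ell-1}$) is then split into pairs using Lemma~\ref{filtlemma}(2) and the decomposition (\ref{direct_sum_decomp}), exactly as in the paper. Your converse via direct coefficient comparison with Euler's criterion is a harmless cosmetic variant of the paper's chain of equivalences.
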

\begin{proof}
Since $\Theta$ satisfies the product rule,
\begin{eqnarray*}
\Theta^{\ell-1}\left(q^{-a}f\right) &\equiv& \sum_{i=0}^{\ell -1} \binom{\ell-1}{i}(-a)^{\ell-1-i}q^{-a}\Theta^i f \mod \ell\\
&\equiv& \sum_{i=0}^{\ell -1} a^{\ell-1-i}q^{-a}\Theta^i f \mod \ell\\
&\equiv& a^{\ell-1}q^{-a}f + \sum_{i=1}^{\ell -1} a^{\ell-1-i}q^{-a}\Theta^i f \mod \ell.
\end{eqnarray*}
A congruence at $a\not\equiv 0\mod \ell$ is thus equivalent to $0\equiv \sum_{i=1}^{\ell -1} a^{\ell-1-i}q^{-a}\Theta^i f \mod \ell$, and hence to $0\equiv \sum_{i=1}^{\ell -1} a^{\ell-1-i}\Theta^i f \mod \ell$.  By Lemma \ref{filtlemma}, for $1\leq i \leq \frac{\ell-1}{2}$ we have
\begin{eqnarray*}
\omega(\Theta^i f) \equiv \omega(\Theta^{i +\frac{\ell-1}{2}} f) \equiv \omega(f) +2i \mod{\ell-1}.
\end{eqnarray*}
By Lemma \ref{filtlemma} (2) and Equation (\ref{direct_sum_decomp}), the only way for the given sum to be zero is if for all $1\leq i\leq \frac{\ell-1}{2}$,
\begin{equation*}
a^{\ell -1-i}\Theta^i f +a^{\ell -1-(i+\frac{\ell-1}{2})}\Theta^{i+ \frac{\ell-1}{2}}f \equiv 0 \mod\ell,
\end{equation*}
which happens if and only if for each $i$
\begin{align*}
\Theta^{i + \frac{\ell-1}{2}}f &\equiv -a^\frac{\ell-1}{2}\Theta^i f \equiv - \leg{a}{\ell}\Theta^i f \mod \ell
\end{align*}
which happens if and only if
\begin{align*}
\Theta^{\frac{\ell+1}{2}}f \equiv - \leg{a}{\ell} \Theta f \mod\ell.
\end{align*}
\end{proof}

\section{Lifting data to characteristic zero}\label{handybasissection}
Recall that we denote $M_k := M_k(\Gamma_1(4))\cap \Z_{(\ell)} [\![q]\!]$, with $k\in\Z$.  Consider the forms
\begin{align*}
E &:= \frac{\eta^{8}(z)}{\eta^4(2z)} \in M_2,\\
F &= \frac{\eta^{8}(4z)}{\eta^4(2z)} = \sum_{n\geq 0} \sigma_1(2n+1)q^{2n+1}\in M_2,\\
\theta_0^2 &= \frac{\eta^{10}(2z)}{\eta^4(z)\eta^4(4z)} = \left(\sum_{n\in \Z} q^{n^2}\right)^2 \in M_1.
\end{align*}
Note that $\ord_0 (E)=1$, $\ord_\infty (F)=1$, $\ord_{1/2} (\theta_0^2) = 1/2$, and that these are the only zeros of these forms.  Since $\dim M_{k} = 1 + \lfloor k/2 \rfloor$, one sees
\begin{align}
M_{2k} &=  \langle E^{k-i}F^i\rangle_{i=0,1,\dots,k}\label{EFbasis}\\
M_{2k+1} &= \theta_0^2\langle E^{k-i}F^i\rangle_{i=0,1,\dots,k},\nonumber
\end{align}
as $\Z_{(\ell)}$-modules, where the basis vectors $E^{k-i}F^i=q^i + \cdots$ have rising orders at $\infty$.  The following modification (partially) arranges for ascending orders at the other cusps as well.  Fix non-negative integers $m_\infty, m_0, m_{1/2}$ such that $m_\infty + m_0 + m_{1/2}\leq k$ and set 
\begin{equation*}
G:=\theta_0^4 = E+16F \in M_2.
\end{equation*}
Define the following submodules of $M_{2k}$ depending on $\overline{m}=(m_\infty, m_0, m_{1/2}, 2k)$.
\begin{eqnarray}
V^{\overline{m}} &:=& \{ f\in M_{2k} | \text{ for all cusps } s, \ord_s f \geq m_s \}\nonumber\\
&=& E^{m_0}F^{m_\infty}G^{m_{1/2}} M_{2(k-m_0 - m_\infty - m_{1/2})}\nonumber\\
&=& \langle E^{k-m_\infty -m_{1/2} - i}F^{m_\infty +i} G^{m_{1/2}} \rangle_{i=0,1,\dots, k-m_0 -m_\infty - m_{1/2}}\nonumber\\
W_{\infty}^{\overline{m}} &:=& \langle E^{k-i}F^i\rangle_{i=0,1,\dots m_\infty -1}\label{VW_basis_defn}\\
W_{0}^{\overline{m}} &:=& \langle E^{i}F^{k-i}\rangle_{i=0,1,\dots m_0 -1}\nonumber\\
W_{1/2}^{\overline{m}} &:=& \langle E^{m_0}F^{k-m_0-i}G^i\rangle_{i=0,1,\dots m_{1/2} -1}\nonumber
\end{eqnarray}
so that each $W_s^{\overline{m}}$ has $m_s$ basis forms, each with distinct order at $s$.  In particular $W_s^{\overline{m}} \subseteq \{f\in M_{2k} |\ord_s f< m_s \}$.  In addition, each form in (\ref{VW_basis_defn}) has a different order at $\infty$.  It follows that (\ref{VW_basis_defn}) has $k$ linearly independent basis vectors and
\begin{eqnarray*}
 M_{2k} = V^{\overline{m}} \oplus W_\infty^{\overline{m}} \oplus W_0^{\overline{m}} \oplus W_{1/2}^{\overline{m}} 
\end{eqnarray*}
as a $\Z_{(\ell)}$-module.  We have the following lifting result.
\begin{prop}\label{rigidityprop}
Let $m_\infty, m_0, m_{1/2}, k$ be non-negative integers satisfying $m_\infty + m_0 + m_{1/2}\leq k$.  Set $\overline{m}=(m_\infty, m_0, m_{1/2}, 2k)$.  Let $V^{\overline{m}}$ and the $W_{s}^{\overline{m}}$ be subspaces of $M_{2k}(\Gamma_1(4))\cap \Z_{(\ell)}[\![q]\!]$ as in (\ref{VW_basis_defn}).

(a) If $f\in M_{2k}$ has $\ord_s (\overline{f}) \geq m_s$ for all cusps $s$, then we can write $f=g+\ell h$, where $g\in V^{\overline{m}}$ and $h\in W_0^{\overline{m}} \oplus W_\infty^{\overline{m}} \oplus W_{1/2}^{\overline{m}}$.

(b) If $f'\in M_{2k+1}$ has $\ord_s (\overline{f'}) \geq m_s$ for all cusps $s$, then $f' = \theta_0^2 f$ for some $f\in M_{2k}$ with $\ord_s(\overline{f}) \geq m_s$ for all cusps $s$.  (Recall $m_{1/2}\in \Z$.)  There are $g\in V^{\overline{m}}$ and $h\in W_0^{\overline{m}} \oplus W_\infty^{\overline{m}} \oplus W_{1/2}^{\overline{m}}$ such that $f'=\theta_0^2 g+\ell \theta_0^2 h$.\end{prop}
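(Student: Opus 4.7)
For part (a), the plan is to exploit the direct sum decomposition
$M_{2k} = V^{\overline{m}} \oplus W_\infty^{\overline{m}} \oplus W_0^{\overline{m}} \oplus W_{1/2}^{\overline{m}}$
to write $f = g + h_\infty + h_0 + h_{1/2}$ uniquely as $\Z_{(\ell)}$-module elements, and to show that each $h_s$ lies in $\ell W_s^{\overline{m}}$; summing then yields $f = g + \ell h$ with $g \in V^{\overline{m}}$ and $h \in W_\infty^{\overline{m}} \oplus W_0^{\overline{m}} \oplus W_{1/2}^{\overline{m}}$ as required.

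The engine of the proof is that the basis vectors of $W_s^{\overline{m}}$ are precisely the basis elements of the combined decomposition whose order at the cusp $s$ is strictly less than $m_s$, and that these $m_s$ basis vectors have distinct orders $0, 1, \ldots, m_s - 1$ at $s$ with unit leading coefficients in $\Z_{(\ell)}$. For $s = \infty$ this is clear from $E = 1 + O(q)$, $F = q + O(q^2)$, $G = 1 + O(q)$; the exponents in the definitions and the hypothesis $m_\infty + m_0 + m_{1/2} \leq k$ ensure that every non-$W_\infty$ basis vector has $\ord_\infty \geq m_\infty$. For $s = 0$ and $s = 1/2$ the analogous statements follow from the cusp expansions of $F$ and $\theta_0^2$ provided in Section~\ref{defnsection}, together with $E = G - 16F$; the leading coefficients that appear involve only powers of $2$ in their denominators and are units in $\Z_{(\ell)}$ for $\ell \geq 5$. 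Given this structure, for each cusp $s$ the coefficients at orders $0, 1, \ldots, m_s - 1$ of the cusp expansion of $f$ at $s$ form an upper-triangular linear system, with unit diagonal, in the $\Z_{(\ell)}$-coefficients of $h_s$ alone; the hypothesis $\ord_s(\overline{f}) \geq m_s$ annihilates the left-hand sides modulo $\ell$ and forces $h_s \in \ell W_s^{\overline{m}}$. The one subtlety is that at the cusp $1/2$ the $W_\infty^{\overline{m}}$ and $W_0^{\overline{m}}$ basis vectors also have $\ord_{1/2} = 0$, so this step must be carried out only after $\overline{h_\infty} = \overline{h_0} = 0$ has been established via the arguments at $\infty$ and $0$; after that reduction $\overline{f} \equiv \overline{g} + \overline{h_{1/2}} \mod \ell$, and since $\ord_{1/2}(\overline{g}) \geq m_{1/2}$ we obtain $\ord_{1/2}(\overline{h_{1/2}}) \geq m_{1/2}$, after which the triangular extraction proceeds as before.

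Part (b) reduces to part (a) via $M_{2k+1} = \theta_0^2 M_{2k}$: because $\theta_0^2$ has unit leading coefficients at every cusp, $f := f'/\theta_0^2$ lies in $M_{2k} \cap \Z_{(\ell)}[\![q]\!]$, and $\ord_s(\overline{f}) = \ord_s(\overline{f'}) - \ord_s(\theta_0^2)$. Using $\ord_\infty(\theta_0^2) = \ord_0(\theta_0^2) = 0$, $\ord_{1/2}(\theta_0^2) = 1/2$, and the fact that $\ord_{1/2}(\overline{f'})$ lies in $\tfrac{1}{2} + \Z$ for an odd-weight form while $m_{1/2} \in \Z$, the hypothesis $\ord_s(\overline{f'}) \geq m_s$ translates to $\ord_s(\overline{f}) \geq m_s$ at every cusp; applying part (a) and multiplying through by $\theta_0^2$ then gives $f' = \theta_0^2 g + \ell \theta_0^2 h$. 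The main obstacle is the careful bookkeeping needed to verify the unit leading coefficients at cusps $0$ and $1/2$, but this is routine given the paper's explicit cusp expansion formulas.
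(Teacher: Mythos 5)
Your proof is correct and follows essentially the same route as the paper: decompose $f$ along $M_{2k}=V^{\overline{m}}\oplus W_\infty^{\overline{m}}\oplus W_0^{\overline{m}}\oplus W_{1/2}^{\overline{m}}$, use the distinct orders and $\Z_{(\ell)}$-unit leading coefficients of the $W_s^{\overline{m}}$ basis vectors at the cusp $s$ to force each component into $\ell W_s^{\overline{m}}$, and reduce part (b) to part (a) by dividing by $\theta_0^2$. You also correctly identify the same subtlety the paper flags, namely that the cusp $1/2$ must be handled only after the components at $\infty$ and $0$ have been shown to vanish modulo $\ell$.
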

\begin{proof}
Write $f=g+h_\infty + h_0 + h_{1/2}$, where $g\in V^{\overline{m}}$ and $h_s\in W_s^{\overline{m}}$.  We show each $\overline{h_s}=0$. (It is important to do this in the correct order.)  If $W_\infty^{\overline{m}} \neq \emptyset$, then let $h_\infty=\sum_{i=0}^{m_\infty -1} a_i E^{k-i}F^i$, with $a_i\in\Z_{(\ell)}$.  If any $a_i\not\equiv 0\mod \ell$, then let $t$ be the least such $i$.  In this case, $h_\infty \equiv a_t q^t + \cdots \mod\ell$ has order $t$.  By construction $V^{\overline{m}}\oplus W_0^{\overline{m}} \oplus W_{1/2}^{\overline{m}}$ only contains forms of order at least $m_\infty$ at the infinite cusp.  Hence
\begin{eqnarray*}
m_\infty \leq \ord_\infty \left( \overline{f} \right)=\ord_\infty \left( \overline{h_\infty} \right)=t< m_\infty,
\end{eqnarray*}
a contradiction.  Thus $\overline{h_\infty}=0$.

Now consider $h_0=\sum_{i=0}^{m_0 -1} b_i E^iF^{k-i}$ with $b_i\in \Z_{(\ell)}$.  If any $b_i\not\equiv 0\mod \ell$, then let $t$ be the least such $i$.  Then $\ord_0 (h_0) = t \leq m_0 -1$.  Since $V^{\overline{m}}\oplus W_{1/2}^{\overline{m}}$ only contains forms with order at least $m_0$ at zero and since $\overline{h_\infty} = 0$,
\begin{align*}
m_0 \leq \ord_0 \left( \overline{f} \right)=\ord_0 \left( \overline{h_0} \right)=t< m_0,
\end{align*}
a contradiction.  Thus $\overline{h_0} = 0$.  An analogous argument shows that if $\overline{h_{1/2}}\neq 0$, then
\begin{align*}
m_{1/2} \leq \ord_{1/2} \left( \overline{f} \right)=\ord_{1/2} \left( \overline{h_{1/2}} \right)< m_{1/2},
\end{align*}
another contradiction.  For part (b), recall that any $f'\in M_{2k+1}$ must have $\ord_{1/2} f' \in \Z + \frac{1}{2}$ and hence is divisible by $\theta_0^2$.  Apply part (a) to $f=f'/\theta_0^2 \in M_{2k}$.
\end{proof}

We have the following Sturm-syle result.

\begin{cor}\label{sturm_cor}
(a) Let $f\in M_{2k}$ and $\ord_0 \left(\overline{f}\right) + \ord_\infty \left(\overline{f}\right) + \ord_{1/2} \left(\overline{f}\right) > k$.  Then for all cusps $s$, $\ord_s \left(\overline{f}\right) = + \infty$ and $\overline{f} =0$.

(b) Let $f\in M_{2k+1}$ and $\ord_0 \left(\overline{f}\right) + \ord_\infty \left(\overline{f}\right) + \ord_{1/2} \left(\overline{f}\right) > k +1/2$.  Then for all cusps $s$, $\ord_s \left(\overline{f}\right) = + \infty$ and $\overline{f} =0$.
\end{cor}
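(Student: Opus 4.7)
The plan is to deduce both parts from Proposition~\ref{rigidityprop} by choosing the parameters $\overline{m}$ extremely enough that the subspace $V^{\overline{m}}$ collapses to a single line. For part (a), I argue by contradiction: assume $\overline{f}\neq 0$, so that every $\ord_s(\overline{f})$ is finite and (for an even weight form on $\Gamma_1(4)$) a non-negative integer. The hypothesis $\ord_0(\overline{f})+\ord_\infty(\overline{f})+\ord_{1/2}(\overline{f})>k$ then permits a greedy choice of non-negative integers $m_0, m_\infty, m_{1/2}$ with $m_s\leq\ord_s(\overline{f})$ and $m_0+m_\infty+m_{1/2}=k$; because $\sum m_s=k<\sum\ord_s(\overline{f})$, at least one of the inequalities $m_{s_0}<\ord_{s_0}(\overline{f})$ must be strict. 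Invoking Proposition~\ref{rigidityprop}(a) gives $f=g+\ell h$ with $g\in V^{\overline{m}}$, and the saturation $\sum m_s=k$ forces the slack factor $M_{2(k-m_0-m_\infty-m_{1/2})}=M_0\cong\Z_{(\ell)}$ to be just the constants. Hence $g=c\,E^{m_0}F^{m_\infty}G^{m_{1/2}}$ for a single $c\in\Z_{(\ell)}$, and since $E,F,G$ have their unique simple zeros at the cusps $0,\infty,1/2$ respectively, this product has $\ord_s=m_s$ at every cusp. If $\overline{c}\not\equiv 0\mod\ell$, then $\ord_s(\overline{g})=m_s$ for every $s$ and hence $\ord_{s_0}(\overline{f})=\ord_{s_0}(\overline{g})=m_{s_0}$, contradicting the strict inequality. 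So $\overline{c}\equiv 0\mod\ell$, forcing $\overline{f}=\overline{g}=0$ and completing the contradiction.

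Part (b) reduces to (a). Proposition~\ref{rigidityprop}(b) (applied trivially with $m_s=0$) writes $f=\theta_0^2 f'$ for some $f'\in M_{2k}$. Since $\overline{\theta_0^2}$ is non-zero with cusp orders $0,0,\tfrac{1}{2}$ at $\infty,0,1/2$ respectively, the additivity of orders under multiplication of the $q$-expansions at each cusp yields
\begin{align*}
\ord_0(\overline{f'})+\ord_\infty(\overline{f'})+\ord_{1/2}(\overline{f'})=\ord_0(\overline{f})+\ord_\infty(\overline{f})+\ord_{1/2}(\overline{f})-\tfrac{1}{2}>k.
\end{align*}
Applying part (a) to $f'\in M_{2k}$ yields $\overline{f'}=0$, whence $\overline{f}=\overline{\theta_0^2}\cdot\overline{f'}=0$.

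The only step requiring any real thought is the combinatorial choice of $(m_0,m_\infty,m_{1/2})$ in part (a), and that is a routine greedy allocation using the fact that the hypothesis provides a strict integer excess over $k$. Once the saturated $V^{\overline{m}}$ collapses to the one-dimensional line spanned by $E^{m_0}F^{m_\infty}G^{m_{1/2}}$, Proposition~\ref{rigidityprop} does all the heavy lifting.
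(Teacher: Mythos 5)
Your proof is correct and follows essentially the same route as the paper's: choose non-negative integers $m_s\leq\ord_s(\overline{f})$ with $\sum m_s=k$ so that $V^{\overline{m}}$ is one-dimensional, apply Proposition~\ref{rigidityprop}, and derive a contradiction from the resulting equality $\ord_s(\overline{f})=m_s$; part (b) reduces to (a) via division by $\theta_0^2$ exactly as in the paper. Your explicit handling of the case $\overline{c}\equiv 0\mod\ell$ is a minor refinement the paper leaves implicit.
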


\begin{proof}
(a) Suppose $\overline{f} \neq 0$.  For each cusp $s$, choose integers $0\leq m_s \leq \ord_s\left(\overline{f}\right)$ such that $m_0 + m_\infty + m_{1/2} = k$.  Set $\overline{m} = (m_\infty, m_0, m_{1/2}, 2k)$ and apply Proposition \ref{rigidityprop}.  Write $f=g +\ell h$, with $g\in V^{\overline{m}}$ and $h\in W_0^{\overline{m}} \oplus W_\infty^{\overline{m}} \oplus W_{1/2}^{\overline{m}}$.  For the parameters in $\overline{m}$, $\dim V^{\overline{m}} = 1$.  Therefore, $g = c E^{m_0} F^{m_\infty} G^{m_{1/2}}\in M_{2k}, c\in \Z_{(\ell)}$.  We now have a contradiction since for any cusp $s$, $\ord_{s}(\overline{f}) = \ord_{s}(\overline{g})=m_{s}$, contrary to our assumption that $\sum \ord_{s}(\overline{f}) > k$.

(b) Apply part (a) to $f/\theta_0^2\in M_{2k}$.
\end{proof}

In the next section we use the following proposition to lift a low point of a Tate cycle --- a $\mod\ell$ object --- to a characteristic zero modular form with high orders of vanishing at the cusps. 

\begin{prop}\label{narrowingprop} Let $k'$ and $i$ be positive integers.

(a) Given $f\in M_{2k'}$, let $2k=\omega(\Theta^i f)$ and $m_s=\ord_s f$ for each cusp $s$.  Set $\overline{m} = (m_\infty, m_0, m_{1/2}, 2k)$. Then there is $g\in V^{\overline{m}}$ such that $\overline{\Theta^i f}=\overline{g}$.

(b) Given $f\in M_{2k'+1}$, let $2k+1=\omega(\Theta^i f)$ and $m_s=\lfloor \ord_s f\rfloor$ for each cusp $s$.  Set $\overline{m} = (m_\infty, m_0, m_{1/2}, 2k)$. Then there is $g\in V^{\overline{m}}$ such that $\overline{\Theta^i f}=\overline{\theta_0^2 g}$.
\end{prop}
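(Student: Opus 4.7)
The plan is to combine the filtration bound of Lemma \ref{reducedordri} with the lifting mechanism of Proposition \ref{rigidityprop}. The representative $R_i^f$ for $\overline{\Theta^i f}$ lives in $M_{2k'+i(\ell+1)}$, which is in general a much higher weight than the true filtration $2k$. The point of the proposition is to pass from this representative to a genuine form in $M_{2k}$ (respectively $\theta_0^2 M_{2k}$) that both reduces correctly and has the desired cuspidal vanishing mod $\ell$.

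For part (a), since $\omega(\Theta^i f)=2k$, by definition of filtration there is some $G\in M_{2k}$ with $\overline{G}=\overline{\Theta^i f}$. Combining Equation~(\ref{Ri is Thetai}) with Lemma~\ref{reducedordri}, for every cusp $s$
\begin{align*}
\ord_s(\overline{G}) \,=\, \ord_s\!\left(\overline{R_i^f}\right) \,\geq\, \ord_s f \,=\, m_s.
\end{align*}
Before invoking Proposition~\ref{rigidityprop}(a), one must check its standing hypothesis $m_\infty+m_0+m_{1/2}\leq k$. But $\overline{G}\neq 0$ (since the filtration $2k$ exists), so Corollary~\ref{sturm_cor}(a) gives $\sum_s \ord_s(\overline{G})\leq k$, and therefore $\sum_s m_s\leq k$ as well. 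Proposition~\ref{rigidityprop}(a) then yields a decomposition $G=g+\ell h$ with $g\in V^{\overline{m}}$, and reducing modulo $\ell$ gives $\overline{\Theta^i f}=\overline{G}=\overline{g}$.

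For part (b) the argument is formally identical, but now $G\in M_{2k+1}$ has odd weight, so $G=\theta_0^2\tilde G$ for some $\tilde G\in M_{2k}$, and $\ord_{1/2}(\overline{G})\in \tfrac12+\Z$. The bound of Lemma~\ref{reducedordri} reads $\ord_{1/2}(\overline{G})\geq \ord_{1/2} f \geq m_{1/2}+\tfrac12 > m_{1/2}$, so the integral hypothesis on $m_{1/2}$ in Proposition~\ref{rigidityprop} is still satisfied. The constraint $\sum m_s\leq k$ follows from Corollary~\ref{sturm_cor}(b): $\sum_s \ord_s(\overline{G})\leq k+\tfrac12$, and since $\ord_{1/2}(\overline{G})\geq m_{1/2}+\tfrac12$ while the other orders are at least $m_0$ and $m_\infty$, subtracting the $\tfrac12$ gives $m_0+m_\infty+m_{1/2}\leq k$. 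Proposition~\ref{rigidityprop}(b) then produces $g\in V^{\overline{m}}$ with $G=\theta_0^2 g+\ell\theta_0^2 h$, and reduction gives $\overline{\Theta^i f}=\overline{\theta_0^2 g}$.

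There is no serious obstacle; the only point requiring care is the verification of the Proposition~\ref{rigidityprop} hypothesis $m_\infty+m_0+m_{1/2}\leq k$, which is not assumed in the statement but is forced on us by the Sturm-style Corollary~\ref{sturm_cor}. Once that is observed, both parts fall out by assembling results already proved in Sections~\ref{defnsection} and~\ref{handybasissection}.
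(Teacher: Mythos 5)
Your proof is correct and follows the same route as the paper's: Lemma~\ref{reducedordri} supplies the cuspidal vanishing mod $\ell$, and Proposition~\ref{rigidityprop} does the lifting. You are in fact slightly more careful than the paper, which tersely applies Proposition~\ref{rigidityprop} to $R_i^f$ without explicitly passing to a weight-$2k$ representative (justified via Remark~\ref{reduced order is well defined}) or checking the hypothesis $m_\infty+m_0+m_{1/2}\leq k$; your verification of that hypothesis via Corollary~\ref{sturm_cor} is a worthwhile addition.
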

\begin{proof}
Lemma \ref{reducedordri} implies for each cusp $s$, $\ord_s \left( \overline{R_i^f}\right) \geq \ord_s(f) \geq m_s$.
In the even weight case, apply Proposition \ref{rigidityprop} (a) to deduce $\Theta^i f\equiv R_i^f \equiv g \mod\ell$ for some $g\in V^{\overline{m}}$.  In the odd weight case use Proposition \ref{rigidityprop} (b).
\end{proof}

\section{Congruences in forms which vanish only at the cusps}\label{voacsection}

This section considers modular forms which vanish only at the cusps.  This condition implies a lot about the Tate cycle.  To begin with, if $f\in M_k$, $\overline{\Theta f}\neq 0$, and $f$ vanishes only at the cusps but is not congruent to a cusp form, then $\overline{f|U_\ell}\neq 0$.  This follows from the more general proposition below:

\begin{prop}\label{cuspsonly}
If $0\neq f\in M_k$, $k\in \Z$, and for some cusp $s$, $\ord_s(\overline{f}) \equiv 0\mod\ell$, then $\overline{f|U_\ell}\neq 0$.
\end{prop}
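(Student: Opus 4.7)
The plan is to reformulate the conclusion using Equation~(\ref{uandtheta}): $\overline{f|U_\ell}=0$ if and only if $\overline{f}\equiv\overline{\Theta^{\ell-1}f}\mod\ell$. It therefore suffices to show that if $\overline{f-\Theta^{\ell-1}f}=0$, then $\ord_s(\overline{f})\not\equiv 0\mod\ell$ at every cusp $s$. I argue by contradiction, assuming some cusp $s$ has $\ord_s(\overline{f})\equiv 0\mod\ell$ while $\overline{f-\Theta^{\ell-1}f}=0$, and deduce that the leading coefficient of $\overline{f}$ at $s$ must vanish mod $\ell$.

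At $s=\infty$ the argument is transparent. Writing $f=\sum a(n)q^n$, Fermat's little theorem gives $\overline{\Theta^{\ell-1}f}\equiv\sum_{\ell\nmid n}a(n)q^n\mod\ell$, so $\overline{f-\Theta^{\ell-1}f}\equiv\sum_{\ell\mid n}a(n)q^n\mod\ell$. Vanishing of this series forces $a(n)\equiv 0\mod\ell$ for every $n$ divisible by $\ell$, in particular at $n=\ord_\infty(\overline{f})$, contradicting the definition of $\ord_\infty(\overline{f})$.

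For $s\in\{0,\tfrac{1}{2}\}$ I transport this argument through the matrix $\gamma_s$ used to define the expansion at $s$. Iterating Equation~(\ref{rgamma}) and using $\overline{E_{\ell-1}}=1$ and $\overline{E_{\ell+1}}=\overline{E_2}$, the quasi-modular $E_2$-corrections cancel mod $\ell$ at each step, yielding
\begin{equation*}
\overline{R_i^f|\gamma_s}\equiv\overline{\Theta^{i}(f|_k\gamma_s)}\mod\ell,
\end{equation*}
where on the right side $\Theta=\frac{1}{2\pi i}\frac{d}{dz}$ acts on an expansion $f|_k\gamma_s=\xi\sum c(n)q^{n/w_s}$ (with $\xi$ a root of unity and $w_s\in\{2,4\}$) by $c(n)q^{n/w_s}\mapsto(n/w_s)c(n)q^{n/w_s}$. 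Because $\overline{R_{\ell-1}^f}=\overline{\Theta^{\ell-1}f}$ and Remark~\ref{reduced order is well defined} makes expansions at $s$ well-defined on reductions mod $\ell$, this gives
\begin{equation*}
\overline{(f-\Theta^{\ell-1}f)|\gamma_s}\equiv\xi\sum_{\ell\mid n}c(n)q^{n/w_s}\mod\ell,
\end{equation*}
since $(n/w_s)^{\ell-1}\equiv 1\mod\ell$ when $\ell\nmid n$ and $\equiv 0\mod\ell$ when $\ell\mid n$ (Fermat, using $\gcd(w_s,\ell)=1$). Vanishing then forces $c(n)\equiv 0\mod\ell$ for every $n$ divisible by $\ell$, contradicting $\ord_s(\overline{f})\equiv 0\mod\ell$ exactly as at $\infty$.

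The only real obstacle is bookkeeping: making the operator identity $\overline{R_i^f|\gamma_s}\equiv\overline{\Theta^{i}(f|_k\gamma_s)}\mod\ell$ both precise and iterable. The $i=1$ case falls straight out of (\ref{rgamma}) because the two $\frac{k}{12}E_2(f|_k\gamma_s)$ terms cancel mod $\ell$; the iterated version then follows because, once the $i=1$ reductions agree, the operator $\Theta$ on $\F_\ell[\![q^{1/w_s}]\!]$ is determined and commutes with further reduction, and $\Theta$ preserves $\Z_{(\ell)}$-integrality of these cusp expansions.
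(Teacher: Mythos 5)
Your proof is correct and follows essentially the same route as the paper: both transport the problem to the cusp $s$ via Equation~(\ref{rgamma}) together with Remark~\ref{reduced order is well defined}, and both exploit that $\Theta$ annihilates the leading coefficient of the expansion at $s$ because its exponent is divisible by $\ell$. The only difference is cosmetic --- the paper records the one-step increase $\ord_s(\overline{R_1^f}) \geq 1 + \ord_s(\overline{f})$ and then invokes the monotonicity from Lemma~\ref{reducedordri}, whereas you compute the full $(\ell-1)$-fold iterate at the cusp via Fermat; both yield $\overline{R_{\ell-1}^f}\neq\overline{f}$.
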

\begin{proof}
Let $\gamma= \left(\begin{array}{cc}1 & 0 \\0 & 1\end{array}\right), \left(\begin{array}{cc}0 & -1 \\1 & 0\end{array}\right)$ or $\left(\begin{array}{cc}1 & 0 \\2 & 1\end{array}\right)$ depending on whether $s=\infty, 0$ or $1/2$, respectively.  Set $c=4$ if $s=0$ and $c=1$ otherwise.  (Thus $c$ is the width of the cusp $s$.)  By examining the orders of the summands in Equation (\ref{rgamma}),

\begin{align*}
\ord_s \left( \overline{R_1^f}\right) &= c ~\ord_\infty \left( \overline{R_1^f|_{k+\ell+1} \gamma}\right) \geq 1 + \ord_s \overline{f}.
\end{align*}

By the proof of Lemma \ref{reducedordri}, $\ord_s \left(\overline{R_{\ell-1}^f}\right)\geq \ord_s \left( \overline{R_1^f}\right) \geq 1 + \ord_s \overline{f}$.  Thus by Remark~\ref{reduced order is well defined} it is impossible for $\overline{R_{\ell-1}^f} = \overline{f}$.  That is,  $\overline{(f|U_\ell)^\ell} = \overline{f} - \overline{\Theta^{\ell-1} f} \neq 0$.
\end{proof}

\begin{prop}\label{voacprop}
Suppose $f\in M_k$, $k\in \Z$, vanishes only at the cusps and $\overline{\Theta f}\neq 0$.  Then for $i\geq 0$, $\omega(\Theta^i f) \geq \omega(f)=k$.  In particular, if $f$ is a member of its own Tate Cycle, then $f$ is a low point.  If $f$ is not a member of its own Tate Cycle, then $\Theta f$ is a low point.
\end{prop}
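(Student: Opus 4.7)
The approach combines the Sturm-style bound of Corollary \ref{sturm_cor} with the cusp-order preservation of Lemma \ref{reducedordri}. The hypothesis that $f$ vanishes only at the cusps saturates the upper bound in Remark \ref{weightbig}, giving $\sum_s \ord_s(f) = k/2$; this tightness propagates through the Tate cycle via Lemma \ref{reducedordri}, and a $q$-expansion comparison pins down the remaining ambiguity.

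First I would establish $\omega(\Theta^i f) \geq \omega(f) = k$ for all $i \geq 0$. Setting $k' := \omega(\Theta^i f)$ and choosing a lift $g \in M_{k'}$ of $\overline{\Theta^i f}$, Lemma \ref{reducedordri} gives
\[
\sum_s \ord_s(\overline{g}) \;=\; \sum_s \ord_s(\overline{\Theta^i f}) \;\geq\; \sum_s \ord_s(f) \;=\; k/2,
\]
while Corollary \ref{sturm_cor} yields $\sum_s \ord_s(\overline{g}) \leq k'/2$, so $k' \geq k$. Specializing to $i = 0$ also gives $\omega(f) = k$. Now if $f$ is in its own Tate cycle, $\overline{f} = \overline{\Theta^{\ell-1}f}$ sits in the cycle and realizes the minimum filtration $k$; by Lemma \ref{filtlemma}(1) the filtration strictly rises by $\ell + 1$ at every step not immediately after a high point, so the local (hence global) minima of the cycle occur only at low points, and $f$ is a low point.

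For the second assertion, assume for contradiction that $f$ is not in its own Tate cycle yet $\Theta f$ is not a low point. Unpacking the definition, neither $\omega(f) \equiv 0 \mod \ell$ nor $\omega(\Theta^{\ell - 1} f) \equiv 0 \mod \ell$, since either would make $\Theta f = \Theta^{j+1}f$ (for $j = 0$ or $j = \ell - 1$, via $\overline{\Theta^\ell f} = \overline{\Theta f}$) a low point. Then $\omega(\Theta f) = k + \ell + 1$ and $\omega(\Theta^\ell f) = \omega(\Theta^{\ell - 1}f) + \ell + 1$; equating with $\omega(\Theta^\ell f) = \omega(\Theta f)$ forces $\omega(\Theta^{\ell - 1} f) = k$. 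Corollary \ref{sturm_cor} is now tight for $\overline{\Theta^{\ell - 1} f}$, yielding $\ord_s(\overline{\Theta^{\ell - 1} f}) = m_s := \ord_s(f)$ for every cusp $s$, and Proposition \ref{rigidityprop} places both $\overline{f}$ and $\overline{\Theta^{\ell - 1} f}$ in the one-dimensional $\F_\ell$-span of $\overline{E^{m_0} F^{m_\infty} G^{m_{1/2}}}$ (in the odd-weight case, divide through by $\theta_0^2$ and invoke part (b)). Thus $\overline{\Theta^{\ell - 1} f} = \lambda \overline{f}$ for some $\lambda \in \F_\ell^\times$.

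The final coefficient comparison --- the main obstacle --- rules out both $\lambda \neq 1$ and $m_\infty \equiv 0 \mod \ell$. By Fermat, $\overline{\Theta^{\ell - 1} f} \equiv \sum_{\ell \nmid n} a(n) q^n \mod \ell$, so the $q^{m_\infty}$-coefficient equals $a(m_\infty)$ when $m_\infty \not\equiv 0 \mod \ell$; this matches that of $\overline{f}$ and forces $\lambda = 1$, so $\overline{\Theta^{\ell - 1} f} = \overline{f}$, contradicting that $f$ is not in its own Tate cycle. When $m_\infty \equiv 0 \mod \ell$ the same coefficient vanishes, contradicting the tight equality $\ord_\infty(\overline{\Theta^{\ell - 1} f}) = m_\infty$. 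Either horn completes the contradiction, so $\Theta f$ must be a low point.
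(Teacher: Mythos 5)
Your proof is correct and follows essentially the same route as the paper: Lemma \ref{reducedordri} plus the Sturm-style Corollary \ref{sturm_cor} give $\omega(\Theta^i f)\geq k$, and the contradiction argument forces $\omega(\Theta^{\ell-1}f)=k$, whence the one-dimensionality of $V^{\overline{m}}$ yields $\overline{\Theta^{\ell-1}f}=\lambda\overline{f}$. The only (harmless) divergence is the final step: you force $\lambda=1$ by a Fermat/leading-coefficient comparison with a case split on $m_\infty\bmod\ell$, whereas the paper just notes that $\Theta^{\ell-1}$ commutes with scalars, so $\lambda^2=\lambda$ and $\lambda=1$ --- both arguments are valid.
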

\begin{proof}
Since $f\in M_k$, obviously $\omega(f)\leq k$.  By Remark \ref{weightbig} and Corollary \ref{sturm_cor}, $\omega(f) \geq k$ and equality follows.  For any $i\geq 1$ and for all cusps $s$, by Lemma \ref{reducedordri}, $\ord_s\left( \overline{R_i^f}\right) \geq \ord_s (f)$.  Hence $\ord_0 \left(\overline{R_i^f}\right) + \ord_\infty \left(\overline{R_i^f}\right) + \ord_{1/2} \left(\overline{R_i^f}\right) \geq k/2$.  By Corollary \ref{sturm_cor} we must have $\omega(\Theta^i f) \geq k$.

Suppose $f$ is not a member of its own Tate cycle and, for the sake of contradiction, that $\overline{\Theta f}= \overline{\Theta^\ell f}$ is not a low point.  By the first assertion, there are two cases:  either $\omega(f) \equiv 0 \mod \ell$ and $\omega(\Theta f) = \omega(f) + 2 \equiv 2 \mod \ell$, or $\omega(\Theta f) = \omega(f)  + \ell + 1$.  In the former case, by Lemma \ref{tate_cycle_basics_lemma} (2) the Tate cycle has a single low point with filtration $2\mod \ell$ and it must then be $\Theta f$.  In the latter case, $k+\ell+1 =\omega(\Theta f) = \omega(\Theta^\ell f) = \omega(\Theta^{\ell-1} f)+\ell + 1$.  In particular $\omega(\Theta^{\ell-1} f) = k$.  However in this case $\dim V^{\overline{m}} =1$.  Therefore $\Theta^{\ell-1} f$ is a constant multiple of $f$ which contradicts $f$ not being in its Tate cycle (since $\Theta$ commutes with scalar multiplication).
\end{proof}


The following two corollaries show the differences between congruences at $a\not\equiv 0 \mod\ell$ and at $0\mod\ell$.  

\begin{cor}\label{unexcor}
Suppose $f\in M_k$, $k\in \Z$, vanishes only at the cusps, $\overline{\Theta f} \neq 0$, and $\omega(f)= A\ell +B$, $0\leq B < \ell$.  If $f$ has a congruence at $a\not\equiv 0\mod\ell$, then either
\begin{enumerate}
 \item $B=\frac{\ell+1}{2}$ and $f$ does not have a congruence at $0\mod\ell$, or
\item $B=\frac{\ell+3}{2}$ and $f$ does have a congruence at $0\mod\ell$.
\end{enumerate}
\end{cor}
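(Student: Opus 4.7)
The plan is to translate the congruence hypothesis, via Lemma~\ref{unexlem}, into the filtration equality
\begin{equation*}
\omega\bigl(\Theta^{(\ell+1)/2} f\bigr) = \omega(\Theta f),
\end{equation*}
and then read off $B$ from the explicit Tate cycle structure provided by Lemma~\ref{tate_cycle_basics_lemma}~(5). By Proposition~\ref{voacprop}, exactly one of the following holds: (A) $f$ is in its own Tate cycle and is a low point, or (B) $f$ is not in its Tate cycle and $\Theta f$ is a low point. By Remark~\ref{things_equiv_to_congs}, case (A) is precisely the case in which $f$ has a congruence at $0\mod\ell$, so the corollary reduces to proving $B=(\ell+3)/2$ in (A) and $B=(\ell+1)/2$ in (B).

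In case (A) I first rule out the single-low-point possibility: by Lemma~\ref{tate_cycle_basics_lemma}~(2) a single low point would have $\omega(f)\equiv 2\mod\ell$, and then Lemma~\ref{filtlemma}~(1) forces $\omega(\Theta^i f)$ to strictly increase over $1\le i\le\ell-2$, contradicting the filtration equality above. Hence $f$ has two low points, and Lemma~\ref{tate_cycle_basics_lemma}~(5) applies with $i_1=\ell-B$ and $s_1=\ell-B+2$. Strict increase of filtration inside each of the two segments forces $(\ell+1)/2$ to lie in the second segment $i_1+1\le i\le\ell-2$ (automatic for $\ell\ge 5$), so
\begin{equation*}
\omega\bigl(\Theta^{(\ell+1)/2} f\bigr) = k + \frac{(\ell+1)^2}{2} - s_1(\ell-1).
\end{equation*}
Equating this with $\omega(\Theta f)=k+\ell+1$ and solving yields $s_1=(\ell+1)/2$, hence $B=(\ell+3)/2$.

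In case (B) I apply exactly the same analysis to the Tate cycle of $\Theta f$, with $\Theta f$ now playing the role of the low point; writing $\omega(\Theta f)=A'\ell+B'$ with $0\le B'\le\ell-1$, the parallel argument produces $B'=(\ell+3)/2$. It remains to translate this into the claimed value of $B$. When $1\le B\le\ell-2$, Lemma~\ref{filtlemma}~(1) gives $\omega(\Theta f)=\omega(f)+\ell+1$ and hence $B'=B+1$, producing $B=(\ell+1)/2$; the case $B=\ell-1$ would force $B'=0$, incompatible with $B'=(\ell+3)/2$. The main obstacle is the subcase $B=0$, where the identity $B'=B+1$ fails. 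Here Remark~\ref{fallfilt} gives $\omega(\Theta f)=k+\ell+1-s(\ell-1)$ for some $s\ge 1$, and the crucial ingredient is the lower bound $\omega(\Theta f)\ge\omega(f)=k$ supplied by Proposition~\ref{voacprop} (this is where the vanishing-only-at-cusps hypothesis enters); the bound forces $s\le(\ell+1)/(\ell-1)<2$, so $s=1$ and $B'\equiv 2\mod\ell$, contradicting $B'=(\ell+3)/2$ for $\ell\ge 5$.
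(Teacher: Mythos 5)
Your proof is correct and takes essentially the same route as the paper: Lemma~\ref{unexlem} converts the congruence into the filtration equality $\omega(\Theta^{(\ell+1)/2}f)=\omega(\Theta f)$, Proposition~\ref{voacprop} identifies which of $f$ or $\Theta f$ is the low point, and the two-low-point data of Lemma~\ref{tate_cycle_basics_lemma}~(5) pins down $B$. Your explicit solve for $s_1=\frac{\ell+1}{2}$ is just an unpacking of Remark~\ref{when_do_we_have_lowest_low_point} (two equally low low points force filtration $\equiv\frac{\ell+3}{2}\bmod\ell$), and your treatment of the $B=0$ and $B=\ell-1$ edge cases matches the paper's.
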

\begin{proof}
If $f$ does not have a congruence at $0\mod\ell$, then by Remark \ref{things_equiv_to_congs}, $f$ is not a member of its Tate cycle.  If $B\neq 0$, then by Proposition~\ref{voacprop}, $\Theta f$ is a low point and $\omega(\Theta f) \equiv B+1\mod \ell$.  By Lemma \ref{unexlem},
\begin{align*}
\Theta^{\frac{\ell+1}{2}} f \equiv \pm \Theta f\mod\ell
\end{align*}
and $\omega( \Theta^{\frac{\ell+1}{2}} f ) = \omega(\Theta f) \equiv B+1\mod\ell$.  Now we have the high points
\begin{align*}
\Theta^{\ell-1} f \equiv \pm \Theta^{\frac{\ell-1}{2}} f\mod\ell 
\end{align*}
and so $\Theta^{\frac{\ell+1}{2}} f$ is also a low point.  Since it has the same filtration as the other low point $\Theta f$, by Remark \ref{when_do_we_have_lowest_low_point}, $B+1 \equiv \frac{\ell+3}{2} \mod \ell$.  From the restrictions on $B$, $B=\frac{\ell+1}{2}$.

  If $B=0$, then $\omega(\Theta f) = \omega(f) + \ell+1 - s(\ell-1)$ for $s\geq 1$.  But by Proposition~\ref{voacprop} we deduce $s=1$ and $\omega(\Theta f)\equiv 2\mod\ell$ is a low point.  Hence by Lemma~\ref{tate_cycle_basics_lemma} the filtration has one low point.  This contradicts Lemma~\ref{unexlem} which implies $\omega( \Theta^{\frac{\ell+1}{2}} f ) = \omega(\Theta f)$.

Similarly, if $f$ does have a congruence at $0\mod\ell$, it is a low point of its Tate cycle by Proposition~\ref{voacprop}.  Remark \ref{when_do_we_have_lowest_low_point} and Lemma \ref{unexlem} show there are two equally low low points and $B=\frac{\ell+3}{2}$.
\end{proof}

\begin{cor}\label{excor}
Suppose $f\in M_k$, $k\in \Z$, vanishes only at the cusps, $\overline{\Theta f} \neq \overline{0}$, and $\omega(f)= A\ell +B$ where $0\leq B\leq \ell-1$.  If $B \geq \frac{\ell +5}{2}$, then $\overline{f|U_\ell} \neq \overline{0}$.
\end{cor}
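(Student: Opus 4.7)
The plan is to argue by contradiction, combining the $U_\ell$ characterization of congruences from Section~\ref{KOsection} with the rigidity imposed on Tate cycles by Proposition~\ref{voacprop}. Suppose $\overline{f|U_\ell}=\overline{0}$. By Remark~\ref{things_equiv_to_congs} this is equivalent to $f \equiv \Theta^{\ell-1}f \mod{\ell}$, so $f$ lies in its own Tate cycle.

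Here the hypothesis that $f$ vanishes only at the cusps does the heavy lifting: Proposition~\ref{voacprop} then forces $f$ to be a low point with $\omega(f)=k$, and moreover every iterate $\Theta^i f$ satisfies $\omega(\Theta^i f)\ge\omega(f)$. In particular $f$ realizes the minimum filtration inside the Tate cycle.

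The remainder is a short case analysis using Section~\ref{tcbasics}. By Lemma~\ref{tate_cycle_basics_lemma}(3) the cycle has either one or two low points. A single low point forces $\omega(f)\equiv 2 \mod{\ell}$ by Lemma~\ref{tate_cycle_basics_lemma}(2), i.e.\ $B=2$. If there are two low points, then since $f$ has the minimum filtration of the cycle, $f$ cannot be strictly higher than the other one, so either $f$ is the strictly lower of the two, giving $3\le B<\frac{\ell+3}{2}$ by Remark~\ref{when_do_we_have_lowest_low_point}, or both low points share the same filtration, giving $B=\frac{\ell+3}{2}$. In every case $B\le \frac{\ell+3}{2}$, contradicting $B\ge\frac{\ell+5}{2}$.

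The argument is entirely structural; no calculation is needed beyond what is already packaged in the cited results. The only step where one might stumble is the scenario in which $f$ is a low point that is not the minimum of its cycle; this is ruled out precisely by Proposition~\ref{voacprop}'s uniform lower bound, which is why the hypothesis that $f$ vanishes only at the cusps is used here in an essential way.
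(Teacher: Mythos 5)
Your proof is correct and follows essentially the same route as the paper: assume $\overline{f|U_\ell}=\overline{0}$, conclude $f$ is in its own Tate cycle, invoke Proposition~\ref{voacprop} to see $f$ is the (a) lowest low point, and then use the Tate-cycle structure to force $B\leq\frac{\ell+3}{2}$. The paper compresses your final case analysis into a single citation of Remark~\ref{when_do_we_have_lowest_low_point}, but the content is identical.
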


\begin{proof}
If $\overline{f|U_\ell} = 0$, then $f$ is a member of its Tate cycle.  Proposition~\ref{voacprop} implies $f$ is the lowest low point of its cycle, but Remark \ref{when_do_we_have_lowest_low_point} shows that the lowest low point must have $1\leq B\leq \frac{\ell+3}{2}$.
\end{proof}

The following two corollaries eliminate the chance for Ramanujan congruences at all but finitely many primes $\ell$ in half-integral weight forms vanishing only at the cusps, and in the inverses of integral-weight forms vanishing only at the cusps, respectively.

\begin{cor}\label{upper_half_of_range_half_wt_forms_cor}
 Let $f\in M_{\lambda + 1/2}$, $\lambda \in\N$, vanish only at the cusps.  If $\lambda \geq 1$, then $f$ has no congruences for $\ell>2\lambda + 1$.  If $\lambda =0$, then $f$ is a scalar multiple of $\theta_0 = \sum q^{n^2}$ and clearly has congruences at $a\mod\ell$ where $\leg{a}{\ell}=-1$.
\end{cor}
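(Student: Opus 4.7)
The $\lambda=0$ case is immediate from the statement. For $\lambda \geq 1$ the plan is to multiply $f$ by an $\ell$th power in order to pass to integer weight, and then to read off the conclusion from the filtration and the corollaries of Section~\ref{voacsection}. Set $g := \theta_0^\ell f$. Since $\ell$ is odd, $g \in M_k$ where $k := \lambda + (\ell+1)/2 \in \Z$, and $g$ vanishes only at the cusps because both $\theta_0$ and $f$ do. The form $\theta_0^\ell = (\theta_0)^\ell$ is an honest $\ell$th power of the integer-coefficient series $\theta_0 = \sum q^{n^2}$, so Lemma~\ref{KOlemma} guarantees that $f$ and $g$ have exactly the same Ramanujan congruences.

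I would next show $\omega(g) = k$. The hypothesis $\ell > 2\lambda + 1$ gives $(\ell+3)/2 \leq k \leq \ell - 1$, so in particular $0 < k < \ell$. Moreover $\ord_{1/2}(\theta_0^\ell) = \ell/4 > 0$, so $\ord_{1/2}(\overline{g}) > 0$ by (\ref{ord_overline_f_geq_ord_f}), and $\overline{g}$ cannot be a nonzero constant. Since $\omega(g) \leq k < \ell$ this forces $\omega(g) \not\equiv 0 \mod \ell$, so Lemma~\ref{filtlemma}(1) yields $\overline{\Theta g} \neq 0$ and Proposition~\ref{voacprop} then delivers $\omega(g) = k$. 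Hence the residue $B$ of $\omega(g)$ modulo $\ell$ is $k = \lambda + (\ell+1)/2$. For $\lambda \geq 2$ one has $B \geq (\ell+5)/2$, so Corollary~\ref{excor} rules out any congruence at $0 \mod \ell$, while Corollary~\ref{unexcor} rules out any congruence at $a \not\equiv 0 \mod \ell$, whose only admissible residues are $B \in \{(\ell+1)/2, (\ell+3)/2\}$.

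The main obstacle is the boundary case $\lambda = 1$, where $B = (\ell+3)/2$ lies precisely in case~(2) of Corollary~\ref{unexcor} and Corollary~\ref{excor} is silent. To handle it I would use that $M_{3/2} = \theta_0 \cdot M_1$ is one-dimensional, spanned by $\theta_0^3$, so $f = c\theta_0^3$. Assuming $f \not\equiv 0 \mod \ell$ (otherwise the statement is vacuous), we have $c \not\equiv 0 \mod \ell$, and the constant term of $g = c\theta_0^{\ell+3}$ equals $c$, which is nonzero modulo $\ell$. Hence $g$ has no congruence at $0 \mod \ell$. This excludes case~(2) of Corollary~\ref{unexcor}, while case~(1) demands $B = (\ell+1)/2 \neq (\ell+3)/2$. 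So $g$, and therefore $f$, has no Ramanujan congruence at all, completing the proof.
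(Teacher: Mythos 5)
Your proposal is correct and follows essentially the same route as the paper: pass to an integral-weight form with the same Ramanujan congruences via Lemma~\ref{KOlemma} (you multiply by $\theta_0^\ell$ where the paper uses $f^{\ell+1}$, giving the same filtration residue $B=\frac{\ell+2\lambda+1}{2}$), then invoke Corollaries~\ref{unexcor} and~\ref{excor}, with $\lambda=1$ treated separately. Your exclusion of a congruence at $0\bmod\ell$ in the $\lambda=1$ case via the nonzero constant term of $c\,\theta_0^{\ell+3}$ is simply a hands-on version of the paper's appeal to Proposition~\ref{cuspsonly} for forms that are not cusp forms.
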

\begin{proof}
In the case $\lambda\geq 2$, by Lemma \ref{KOlemma} it suffices to show $f^{\ell+1}\in M_{(\lambda + 1/2)(\ell+1)}$ has no congruences.  Since $f$ vanishes only at the cusps, so does $f^{\ell+1}$.  Now $\omega(f^{\ell+1}) \equiv \left(\frac{\ell +1}{2}\right)(2\lambda +1) \equiv \frac{\ell + 2\lambda+1}{2} \mod \ell$.  Take $B=\frac{\ell+2\lambda +1}{2}$ in Corollaries \ref{unexcor} and \ref{excor}.

If $\lambda=0$ or $1$, then $f$ is not a cusp form and Proposition \ref{cuspsonly} precludes congruences at $0\mod\ell$.  By Corollary \ref{unexcor}, in the subcase $\lambda =1$ there are no congruences at all.  The subcase $\lambda=0$ is obvious.
\end{proof}

\begin{cor}\label{inverses_of_integral_wt_forms_cor}
If $f\in M_k$, $k\in \Z$, vanishes only at the cusps, then $f^{-1}$ has no congruences for $\ell > 2k+3$.
\end{cor}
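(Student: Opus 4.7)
The plan is to mirror the proof of Corollary~\ref{upper_half_of_range_half_wt_forms_cor}: convert the question about $f^{-1}$ into a question about a genuine modular form via Lemma~\ref{KOlemma}, then apply Corollaries~\ref{unexcor} and~\ref{excor}. Specifically, take $g := f^\ell$; since $f^\ell \equiv \sum a(n)q^{\ell n} \mod \ell$ (where $f = \sum a(n)q^n$), Lemma~\ref{KOlemma} tells me that $f^{-1}$ has a Ramanujan congruence at $a\mod\ell$ if and only if $f^{-1}\cdot g = f^{\ell-1} \in M_{k(\ell-1)}$ does. Since $f$ vanishes only at the cusps, so does $f^{\ell-1}$, placing it squarely in the scope of the earlier corollaries.

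Next I would compute the filtration. The trivial case $k=0$ leaves only constants and is discarded. Otherwise, Proposition~\ref{voacprop} gives $\omega(f)=k$, so Lemma~\ref{filtlemma}(3) yields $\omega(f^{\ell-1}) = (\ell-1)k$. Since $\ell>2k+3$ is an odd prime we in fact have $\ell \geq 2k+5$, and in particular $\ell > k$. Writing $(\ell-1)k = (k-1)\ell + (\ell-k)$ puts this in the form $A\ell + B$ with $0\le B<\ell$, and the residue $B = \ell - k$ satisfies $B \geq (\ell+5)/2$.

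To invoke Corollaries~\ref{unexcor} and \ref{excor} on $f^{\ell-1}$, I need the nondegeneracy $\overline{\Theta f^{\ell-1}}\neq 0$. Since $0<k<\ell$, the filtration $\omega(f)=k$ is not divisible by $\ell$, so Lemma~\ref{filtlemma}(1) produces $\omega(\Theta f)=k+\ell+1$ and hence $\overline{\Theta f}\neq 0$. Then $\overline{\Theta f^{\ell-1}} = -\overline{f^{\ell-2}}\cdot\overline{\Theta f}$ is nonzero because $\F_\ell[\![q]\!]$ is an integral domain and $\overline{f}\neq 0$. Now Corollary~\ref{unexcor} forbids a congruence at $a\not\equiv 0\mod\ell$ for $f^{\ell-1}$ (which would force $B\in\{(\ell+1)/2,(\ell+3)/2\}$, contradicting $B\ge (\ell+5)/2$), while Corollary~\ref{excor} forbids a congruence at $0\mod\ell$ (it guarantees $\overline{f^{\ell-1}|U_\ell}\neq 0$). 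Combining these rules out every Ramanujan congruence for $f^{\ell-1}$, and hence, by Lemma~\ref{KOlemma}, for $f^{-1}$.

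The argument is essentially a direct analogue of the half-integral-weight case; the only genuine choice is using $f^{\ell-1}$ as the auxiliary modular form rather than $f^{\ell+1}$. No step is technically delicate once the residue $B=\ell-k$ is identified and the $\Theta$-nonvanishing is handled, so the ``hard part'' is simply keeping the filtration bookkeeping straight and remembering to verify the hypotheses of Corollaries~\ref{unexcor} and~\ref{excor}.
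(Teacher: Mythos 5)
Your proof is correct and follows essentially the same route as the paper: pass from $f^{-1}$ to $f^{\ell-1}$ via Lemma~\ref{KOlemma}, compute $\omega(f^{\ell-1})=k(\ell-1)\equiv \ell-k \mod\ell$ with $B=\ell-k\geq\frac{\ell+5}{2}$, and apply Corollaries~\ref{unexcor} and~\ref{excor}. Your explicit verification that $\overline{\Theta f^{\ell-1}}=-\overline{f^{\ell-2}}\,\overline{\Theta f}\neq 0$ is a welcome detail the paper leaves implicit, but it does not change the argument.
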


\begin{proof}
By Lemma \ref{KOlemma}, the power series $f^{-1}$ has the same congruences as $f^{\ell-1}\in M_{k(\ell-1)}$.  Since $f$ vanishes only at the cusps, so does $f^{\ell-1}$.  Now $\omega(f^{\ell-1}) = k(\ell-1) \equiv \ell-k \mod \ell$ and $\frac{\ell+3}{2} < \ell -k < \ell$.  Take $B=\ell-k$ in Corollaries \ref{unexcor} and \ref{excor}.
\end{proof}

The congruences of the inverse of a half-integral weight modular form are a bit trickier to find, but will always yield to an extension of the Ahlgren-Boylan technique which we illustrate in the following section.

\section{Proof of Theorem \ref{algorithm_thm}}\label{alg_section}
The case of inverses of integral-weight modular forms is covered by Corollary~\ref{inverses_of_integral_wt_forms_cor}.  Thus, let $f\in M_{k/2}$ with $k\geq 3$ odd and vanishing only at the cusps.  Such $f$ must be of the form
\begin{align*}
f=cE^{m_0} F^{m_\infty} \theta_0^{4m_{1/2}}
\end{align*}
where $c\in \Z_{(\ell)}$, $m_{0}, m_{\infty}\in \Z_{\geq 0}$, $m_{1/2}\in \frac{1}{4}\Z_{\geq 0}$, $\ord_{s}f = m_{s}$ and $m_0 + m_\infty + m_{1/2} = k/4$.  Without loss of generality, assume $c=1$.  We provide a method to find all of the finitely many possible primes $\ell\geq 5$ for which there may be a Ramanujan congruence of the sequence $f^{-1}\in \Z_{(\ell)}[\![ q ]\!]$.  Since modular forms on $\Gamma_1(4)$ are completely determined by their first few coefficients, it is always a finite computation to check if any particular prime $\ell$ has Ramanuajan-type congruences.  In this section we eliminate all large $\ell$.  In fact, we assume $\ell>(k+1)(k+3)$.

By Lemma \ref{KOlemma} it suffices to find the congruences for $f^{\ell - 1} \in M_{k(\ell-1)/2}$.  Since $f$ vanishes only at the cusps, the same is true for $f^{\ell - 1}$.  By Lemma \ref{filtlemma} and Proposition~\ref{voacprop}, $\omega(f^{\ell - 1}) = k\frac{\ell-1}{2} \equiv \frac{\ell - k}{2} \mod \ell$.

Let us dispense with the case when $\overline{\Theta f^{\ell-1}}=0$.  Compute
\begin{align*}
\Theta f^{\ell-1} &\equiv \Theta E^{m_{0}(\ell-1)}F^{m_{\infty}(\ell-1)}\theta_{0}^{4m_{1/2}(\ell-1)}\mod\ell\\
&\equiv -m_{\infty}q^{m_{\infty}(\ell-1)} -(8m_{1/2}-8{m_{0}})(1-m_{\infty})q^{m_{\infty}(\ell-1) +1}\\
& ~+ (32m_{0}^{2} + 8m_{0} - 64m_{0}m_{1/2} + 8m_{1/2}+32m_{1/2}^{2})(2-m_{\infty}) q^{m_{\infty}(\ell-1)+2} +\cdots \mod\ell
\end{align*}
If $\overline{\Theta f^{\ell-1}}=0$, then $\ell$ divides $m_{\infty}< k<\ell$.  Thus $m_{\infty}=0$.  From the coefficient of $q^{m_{\infty}(\ell-1) +1}$ above we deduce $m_{1/2}\equiv m_{0}\mod\ell$.  Hence $\ell$ divides $m_{1/2}-m_{0} \leq k < \ell$ and so we deduce $m_{1/2}=m_{0}$.  From the coefficient of $q^{m_{\infty}(\ell-1)+2}$ we now conclude $m_{0}\equiv 0\mod \ell$ and in fact $m_{0}=0$.  In particular, $f=1$ contrary to the choice of $f$.  Therefore, $\overline{\Theta f^{\ell-1}}\neq 0$.

Suppose $f^{\ell-1}$ has a congruence at $a\not\equiv 0\mod\ell$.  Then by Corollary \ref{unexcor}, $\frac{\ell - k}{2} \equiv \frac{\ell +1}{2}$ or $\frac{\ell +3}{2} \mod\ell$.  This can only happen for the finitely many prime divisors of $(k+1)(k+3)$.

Hence, if $\ell > (k+1)(k+3)$ then it is only possible to have a congruence at $0\mod\ell$.  Suppose there is such a congruence.  That is, $f^{\ell-1}$ is a member of its own Tate cycle and by Proposition \ref{voacprop}, $f^{\ell-1}$ is a low point.  Since $\ell > k$, when we write $\omega(f^{\ell - 1}) = A\ell +B$, we can take $0< \frac{\ell-k}{2} = B < \ell$.  By Lemma~\ref{tate_cycle_basics_lemma} (5), the other low point has filtration 
\begin{equation*}
\omega(\Theta^{\frac{\ell+k+2}{2}} f^{\ell-1}) = \omega(f^{\ell-1}) + k+3 = k\left(\frac{\ell-1}{2}\right) + k + 3.
\end{equation*}
By Proposition \ref{narrowingprop}, there is some $g\in M_{k\left(\frac{\ell-1}{2}\right) + k+3}$ such that $\overline{\Theta^{\frac{\ell+k+2}{2}} f^{\ell-1}} = \overline{g}$ and for each cusp $s$, $\ord_s g \geq \ord_s f^{\ell-1}$.  In particular, $g/f^{\ell-1} \in M_{k+3}$.  We may use any convenient basis to represent $M_{k+3}$.  For example,
\begin{align}
\Theta^{\frac{\ell+k+2}{2}} f^{\ell-1} &\equiv g \equiv f^{\ell-1} \left( g/f^{\ell-1} \right)\mod\ell\nonumber\\
&\equiv f^{\ell-1} \left( \sum_{i=0}^{(k+3)/2} a_i E^{\frac{k+3}{2} - i}F^i \right) \mod \ell,\label{quasi_equals_true}
\end{align}
where {\it a priori} $a_i \in \Z_{(\ell)}$, but working $\mod \ell$ allows one to take $a_{i}\in\Z$.  By Proposition \ref{cuspsonly}, $m_\infty = \ord_\infty f \geq 1$.  Since the $\Theta$ operator satisfies the product rule,
\begin{align*}
\Theta^{\frac{\ell+k+2}{2}} f^{\ell-1} &\equiv \Theta^{\frac{\ell+k+2}{2}} \left\{ f^{-1} (q^{\ell m_\infty} + O(q^{\ell (m_\infty + 1)})\right\} \mod \ell \\
&\equiv q^{\ell m_\infty} \Theta^{\frac{\ell+k+2}{2}} f^{-1} + O(q^{\ell m_\infty + \ell -m_\infty}) \mod\ell,
\end{align*}
and similarly
\begin{align*}
f^{\ell-1} \equiv q^{\ell m_\infty} f^{-1}+O(q^{\ell m_\infty + \ell -m_\infty}) \mod\ell,
\end{align*}
implying
\begin{align}
\Theta^{\frac{\ell+k+2}{2}} f^{-1} &\equiv f^{-1} \left( \sum_{i=0}^{(k+3)/2} a_i E^{\frac{k+3}{2} - i}F^i \right) + O(q^{\ell-m_\infty}) \mod \ell. \label{eqn_for_general_alg_with_ai}
\end{align}
Invert $f$ as a Laurent series with integer coefficients.  Write $f^{-1}=\sum_{i=-m_\infty}^\infty b_iq^i$.  Noticing that $\Theta^{\frac{\ell-1}{2}}$ acts by twisting each coefficient by the Legendre symbol, we see
\begin{align}
\Theta^{\frac{\ell+k+2}{2}} f^{-1} &\equiv \leg{\cdot}{\ell} \otimes \Theta^{\frac{k+3}{2}} f^{-1} + O(q^{\ell-m_\infty}) \mod \ell\nonumber\\
&\equiv \sum_{i=-m_\infty}^{\ell-m_\infty-1} \leg{i}{\ell} b_i i^{\frac{k+3}{2}} q^i +O(q^{\ell-m_\infty}) \mod \ell. \label{eqn_for_general_alg_with_legendre}
\end{align}
Truncate the series in (\ref{eqn_for_general_alg_with_legendre}) to keep only the first $(k+5)/2$ terms.  We will solve for the integers $a_i$ in (\ref{eqn_for_general_alg_with_ai}), but this requires making choices for the (finitely many) Legendre symbols in this initial segment.  For each tuple of possible choices for these Legendre symbols, solve for the $a_i$ and proceed as follows.

Lemma~\ref{quasi never true} to follow proves there must be some coefficient at which they are not equal, only congruent.  The difference between these two coefficients must be divisible by $\ell$.  (The prime $\ell$ must also satisfy the choices for the Legendre symbol.)

To summarize, for any half-integer weight modular form $f$, we can always complete these calculations to arrive at a finite list of possible primes $\ell$ for which there is a Ramanujan congruence.  After individually checking each of these primes, one will have found all of the Ramanujan congruences.

\begin{lem}\label{quasi never true}
Let $k\geq 3$ be odd and $\ell > k+4$ be prime.  For any non-zero $f\in M_{k(\ell-1)/2}$ and non-zero $g\in M_{k(\ell-1)/2 +k+3}$, $\Theta^{(\ell+k+2)/2}f\neq g$.
\end{lem}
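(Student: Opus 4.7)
The plan is to exploit the fact that quasi-modular forms on $\Gamma_1(4)$ admit a unique weight. Set $w := \mathrm{wt}(f) = k(\ell-1)/2 > 0$ and $n := (\ell+k+2)/2$.

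I would first verify by induction that $\Theta^n f$ is a nonzero quasi-modular form of weight $w + 2n = k(\ell-1)/2 + \ell + k + 2$. For the weight: any modular form $h$ satisfies $\Theta h = \frac{\mathrm{wt}(h)}{12} E_2 h + h'$ with $h' \in M_{\mathrm{wt}(h)+2}$, so $\Theta h$ is quasi-modular of weight $\mathrm{wt}(h)+2$; more generally, Ramanujan's identity $\Theta E_2 = (E_2^2 - E_4)/12$ combined with the Leibniz rule shows that $\Theta$ raises the weight of an arbitrary quasi-modular form by $2$. For non-vanishing: $f$ has positive weight, hence is non-constant, so some $q^m$-coefficient of $f$ with $m \geq 1$ is nonzero, and then the $q^m$-coefficient of $\Theta^n f$ is that same coefficient times $m^n$, still nonzero.

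Meanwhile $g$ is modular and therefore quasi-modular of weight $w + k + 3$. Since $(w + 2n) - (w + k + 3) = \ell - 1 > 0$, the two nonzero quasi-modular forms $\Theta^n f$ and $g$ have different weights, so they cannot be equal.

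The sole non-trivial input is that the graded algebra of quasi-modular forms on $\Gamma_1(4)$ assigns a unique weight to each nonzero element. This is the standard fact that $E_2$ is transcendental over the modular forms ring $M^{*}$, making the quasi-modular ring a polynomial ring $M^{*}[E_2]$ graded by weight (one can verify this concretely on $\Gamma_1(4)$ since $M^{*}$ is itself a polynomial ring in $F$ and $\theta_0^2$ after eliminating $E$ via $\theta_0^4 = E + 16F$). The main obstacle is essentially bookkeeping to carry the weight-$2$ increase cleanly through the induction; once the unique-weight property is invoked, the lemma is immediate, and the hypothesis $\ell > k+4$ plays no essential role beyond guaranteeing $\ell - 1 > 0$.
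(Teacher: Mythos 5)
Your argument is correct, but it reaches the conclusion by a different route than the paper, and the comparison is worth recording. The paper writes $\Theta^{(\ell+k+2)/2}f=\sum_j f_jE_2^j$ with $f_j$ modular of weight $k(\ell-1)/2+\ell+k+2-2j$, supposes $g$ equals this sum, applies $\tau\mapsto\tau/(4\tau+1)$, and obtains a polynomial identity in $z=4\tau+1$ valid at infinitely many points; all coefficients must vanish, and the hypothesis $\ell>k+4$ is used precisely to ensure that the coefficient of $z^{k+3}$ is exactly $g(\tau)$, forcing $g=0$. You instead invoke the structure theorem that the quasi-modular forms on $\Gamma_1(4)$ form the graded polynomial ring $M_*[E_2]$, so a nonzero quasi-modular form has a unique weight; since $\Theta^{(\ell+k+2)/2}f$ is homogeneous of weight $k(\ell-1)/2+\ell+k+2$ while $g$ has weight $k(\ell-1)/2+k+3$, equality would force $g=0$. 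This is cleaner, and it correctly exposes that $\ell>k+4$ is not needed for the statement itself --- the weights always differ by $\ell-1$ --- whereas the paper needs the hypothesis only because its one-shot argument isolates a single power of $z$ rather than using the full grading. One caveat: your parenthetical suggestion that transcendence of $E_2$ over $M_*(\Gamma_1(4))$ can be "verified concretely" because $M_*$ is a polynomial ring in $\theta_0^2$ and $F$ does not by itself establish transcendence or the directness of the weight grading; the standard proof of that fact is exactly the paper's computation (expand under an element of $\Gamma_1(4)$ with nonzero lower-left entry and compare powers of $4\tau+1$). So at bottom the two proofs rest on the same mechanism, with yours outsourcing it to the Kaneko--Zagier structure theorem; also, your verification that $\Theta^{(\ell+k+2)/2}f\neq 0$ is harmless but unnecessary, since the assumed equality with the nonzero $g$ already supplies it.
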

\begin{proof}
We adapt \cite{atkgar2003a} Proposition 3.3 to suit our specific needs.  The quasi-modular form $\Theta^{(\ell+k+2)/2}f$ is of the form
\begin{align*}
\Theta^{(\ell+k+2)/2}f(\tau) = \sum_{j=0}^{\frac{\ell+k+2}{2}} f_{j}(\tau) E_{2}^{j}(\tau),
\end{align*}
where $f_{j}\in M_{k(\ell-1)/2 +\ell+k+2-2j}$.  Assume $g(\tau)=\sum f_{j}(\tau)E_{2}^{j}(\tau)$ and apply $\tau \mapsto \frac{\tau}{4\tau+1}$.  Recall $E_{2}(\frac{\tau}{4\tau +1})= (4\tau +1)^{2}E_{2}(\tau) -\frac{24i}{\pi}(4\tau+1)$.  Letting $\alpha :=-\frac{24i}{\pi}$,  we have for all $\tau\in \hp$,
\begin{align*}
(4\tau +1)^{k\left( \frac{\ell-1}{2} \right) + k + 3} g(\tau) =
\sum_{j=0}^{\frac{\ell+k+2}{2}} (4\tau +1)^{k\left( \frac{\ell-1}{2} \right) + \ell + k + 2-2j}f_{j}(\tau) \left((4\tau +1)^{2}E_{2}(\tau) + \alpha(4\tau+1)\right)^{j},
\end{align*}
and hence for all $\tau\in\hp$,
\begin{align*}
0=(4\tau +1)^{k+3} g(\tau) - \sum_{m=\frac{\ell+k+2}{2}}^{\ell+k+2} (4\tau +1)^{m}\left(\sum_{\substack{0\leq j\leq \frac{\ell+k+2}{2}\\0\leq s\leq j \\ j=\ell+k+2+s-m}}{j\choose s}\alpha^{j-s} f_{j}(\tau)E_{2}^{s}(\tau)\right).
\end{align*}
Since $g(\tau), f_{j}(\tau)$ and $E_{2}(\tau)$ are all invariant under $\tau\mapsto \tau +1$, the polynomial
\begin{align*}
z^{k+3} g(\tau) - \sum_{m=\frac{\ell+k+2}{2}}^{\ell+k+2} z^{m}\left(\sum_{\substack{0\leq j\leq \frac{\ell+k+2}{2}\\0\leq s\leq j \\ j=\ell+k+2+s-m}}{j\choose s}\alpha^{j-s} f_{j}(\tau)E_{2}^{s}(\tau)\right)
\end{align*}
has infinitely many zeros $z=4\tau +1, 4\tau +5, 4\tau +9, \dots$.  Therefore the coefficients must be zero.  By the assumption $\ell > k+3$, the index $m$ is never $k+2$.  Hence $g(\tau)=0$ contrary to assumption.
\end{proof}

\section{Proofs of Theorems}\label{cooleg}

\begin{proof}[Proof of Theorem \ref{mainthm}]
The cusp forms of least weight on $\Gamma_1(4)$ are generated by
\begin{equation}
f:=\theta_0 FE \in S_{9/2}(\Gamma_1(4)).
\end{equation}
By Lemma \ref{KOlemma} the series $f^{-1}$ will have a congruence at $a\mod\ell$ if and only if $f^{\ell-1}$ has one at $a\mod\ell$.  Since $\omega(f^{\ell-1})=\frac{9}{2}(\ell-1) \equiv \frac{\ell-9}{2} \mod \ell$, by Corollary \ref{unexcor} there can be congruences at $a\not\equiv 0\mod\ell$ only if $\ell = 3$ or $5$.

In the first case, the Sturm bound~\cite{stu1987a} implies that only a short computation is needed to see that $f^2 \equiv -\Theta f^2\mod 3$ and so $f^2 \equiv \Theta^2 f^2\mod 3$.  By Lemma \ref{unexlem}, $f^{-1}$ has congruences at $0\mod 3$ and $1\mod 3$.  In the second case, a finite computation shows that $f^{-1}$ only has congruences for $\ell=5$ at $2\mod 5$ and $3\mod 5$.  Although our machinery does not apply for $\ell=2$, a short calculation shows $f^{-1}$ has a congruence at $0\mod 2$.  An inspection of the coefficients of $q^7, q^{13}$ and $q^{22}$ in $f^{-1}$ shows there are no congruences for $\ell=7,11,13$.  We now move on to $\ell \geq 17$.

Suppose $f^{\ell-1}$ has a congruence at $0\mod\ell$.  The rest of this proof follows Section~\ref{alg_section} so we only provide the explicit calculations.  Now $f^{\ell -1}$ is a low point of its Tate cycle and the other low point is $\omega(\Theta^{\frac{\ell+11}{2}} f^{\ell-1}) = \omega(f^{\ell-1}) + 12$.  Hence
\begin{eqnarray*}
\Theta^{\frac{\ell+11}{2}} f^{\ell-1} \equiv f^{\ell-1} \left( \sum_{i=0}^6 a_i E^{6-i}F^{i} \right) \mod \ell,
\end{eqnarray*}
implying
\begin{eqnarray}
\Theta^{\frac{\ell+11}{2}} f^{-1}\equiv f^{-1} \left( \sum_{i=0}^6 a_i E^{6-i}F^{i} \right) + O(q^{\ell-1}) \mod\ell.\label{flincomb}
\end{eqnarray}
Invert $f$ as a power series with integer coefficients to get
\begin{equation*}
f^{-1} = q^{-1} + 6 + 24q + 80 q^2 + 240 q^3 + 660 q^4 + 1696 q^5 + 4128 q^6 + 9615 q^7 + 21560 q^8 
 + O(q^{9}).
\end{equation*}
We compute
 \begin{align}
\Theta^{\frac{\ell+11}{2}} f^{-1}&\equiv \leg{\cdot}{\ell} \otimes \Theta^6 f^{-1} + O(q^{\ell})\notag\\
&\equiv \leg{-1}{\ell} q^{-1} +24q + \leg{2}{\ell}5120q^2 + \leg{3}{\ell}174960q^3 +2703360q^4\label{legeqn}\\ &~ + \leg{5}{\ell}26500000q^5 +O(q^{6}).\notag
\end{align}
For each of the $2^4$ choices of signs for the Legendre symbols, a computer can easily compute the integers $a_i$ in Equation (\ref{flincomb}).  Comparing the coefficients of $q^6, q^8,$ and $q^{9}$ in Equation (\ref{flincomb}) leads to a contradiction.  For example, suppose $\ell$ satisfies $\leg{-1}{\ell}=\leg{2}{\ell}=-\leg{3}{\ell}=-\leg{5}{\ell}=1$.  One computes that  $a_0=1, a_1=42, a_2=612, a_3=8656, a_4=-76608, a_5=1074912, a_6=-15155584$.  Hence the right side of Equation (\ref{flincomb}) is
\begin{align*}
q^{-1}  &+ 24 q + 5120 q^2  - 174960 q^3  + 2703360 q^4  - 26500000 q^5  - 29891712 q^6  - 911605665 q^7  \\ &- 2744268800 q^8  - 18190442184 q^{9}  - 59662291200 q^{10} - 254616837584 q^{11}  +O(q^{12}),
\end{align*}
whereas the left side may be computed as in Equation (\ref{legeqn}): 
\begin{align*}
q^{-1} &+ 24 q + 5120 q^2  - 174960 q^3  + 2703360 q^4  - 26500000 q^5  - 192595968 q^6  \pm 1131195135 q^7 \\&+ 5651824640 q^8  + 24858684216 q^9  - 98592000000 q^{10} \pm 358875741136 q^{11}+O(q^{12}).
\end{align*}
The $\pm$ come from $\leg{7}{\ell}$ and $\leg{11}{\ell}$.  Since these power series are congruent $\mod\ell$, so are the coefficients of $q^6$ and $q^8$.  But $-29891712\equiv -192595968\mod\ell$ implies $\ell = 2,3,11,13$ or $2963$, while $-2744268800\equiv 5651824640\mod\ell$ implies $\ell =2,5,7$ or $117133$.  Since we've assumed $\ell\geq 17$, we have reached a contradiction.
\end{proof}

\begin{proof}[Proof of Theorem \ref{secondthm}]
Let $g=\theta_0 E^2 F\in S_{13/2}(4)$.  Now $g^{-1}$ will have a congruence at  if and only if $g^{\ell-1}$ does.  Since $\omega(g^{\ell-1}) \equiv \frac{\ell-13}{2}\mod\ell$, Corollary \ref{unexcor} implies there can only be congruences with $a\not\equiv 0\mod\ell$ if $\ell=2$ or $7$.  For $\ell=7$, one checks that $\Theta^4 g^6 \equiv -\Theta g^6$ and by Lemma \ref{unexlem}, $g^6$ and hence $g^{-1}$ have congruences at $1,2,4\mod 7$.

Elementary calculations show no congruences for $0\mod\ell$ when $3\leq \ell\leq 13$.  For $l\geq 17$, if $g^{\ell-1}$ has a congruence at $0\mod\ell$, then it is the lowest low point of its Tate cycle and the other low point is $\omega(\Theta^{\frac{\ell+15}{2}}g^{\ell-1})=\omega(g^{\ell-1}) +16$.  Analogously to Theorem~\ref{mainthm}, $\Theta^{\frac{\ell+15}{2}}g^{-1}\equiv g^{-1}\left( \sum_{i=0}^8 b_i E^{8-i}F^i\right) +O(q^\ell)\mod\ell.$  In the case where $\leg{-1}{\ell} = \leg{2}{\ell} = \leg{3}{\ell} = \leg{5}{\ell}=\leg{7}{\ell}=-1$, solving for the $b_i$ yields $b_0=-1,b_1= -50,b_2= -788,b_3= -175024,b_4= -26446064,b_5= 539142592,b_6= -13397175040,b_7= 271206416128$, and $b_8= -5171059369600$.  Examining the coefficients of $q^8,\dots, q^{12}$ in both sides of the previous equivalence precludes all possible primes $\ell\geq 17$.  The situation for each of the $2^5$ choices for the Legendre symbols is similar.
\end{proof}

The proofs of the remaining theorems all require the same essential tool.  For $d=1,2,4$, although $\eta(dz)\not\in M_{1/2}$,  by \cite{ono2004a} Theorems 1.64 and 1.65 we have $\eta(dz)^{24}\in M_{12}$.  Since $24| \ell^{2}-1$ when $\ell\geq 5$, the strategy in the following proofs is to use Lemma~\ref{KOlemma} to replace occurrences of  $\eta(dz)^{-1}$ with  $\eta(dz)^{\ell^{2}-1}$ and occurrences of  $\eta(dz)$ with  $\eta(dz)^{(\ell^{2}-1)(\ell -1)}$.  This does not change the filtration $\mod\ell$.  Set
\begin{align*}
\delta=\delta_{\ell}:=\frac{\ell^{2}-1}{24}.
\end{align*}

\begin{proof}[Proof of Theorem \ref{overpartthm}]

The overpartition generating function is 
\begin{align*}
\overline{P}(z) = \sum \overline{p}(n) q^n = \prod_{n=1}^\infty \left( \frac{1+q^n}{1-q^n} \right) = \frac{\eta(2z)}{\eta(z)^2}.
\end{align*}
The prime 3 may be checked by direct computation and so we let $\ell\geq 5$ be prime.  By Lemma~\ref{KOlemma}, $\overline{P}(z)$ has a congruence at $a\mod\ell$ if and only if there is a congruence at $a\mod\ell$ for
\begin{align*}
f:=\eta(2z)^{(\ell-1)(\ell^2-1)}\eta(z)^{2(\ell^2-1)} = \left(\eta(2z)^{24(\ell-1)}\eta(z)^{48}\right)^{\frac{\ell^{2}-1}{24}}\in M_{\frac{(\ell-1)(\ell+1)^{2}}{2}}.
\end{align*}
Since $\ord_{\infty}f=\frac{\ell(\ell^{2}-1)}{12}$, by Proposition~\ref{cuspsonly} there is no congruence at $0\mod\ell$.  Since $\omega(f) \equiv \frac{\ell-1}{2} \mod\ell$, by Corollary~\ref{unexcor} there can only be congruences at $a\mod\ell$ if $\frac{\ell-1}{2} \equiv \frac{\ell+1}{2} \mod\ell$ which never happens for $\ell \geq 5$.
\end{proof}

\begin{proof}[Proof of Theorem \ref{crankthm}]
By \cite{chokanlov2009a}, the crank difference generating function is
\begin{equation*}
\sum_{n\geq 0} \left( M_e(n) - M_o(n) \right) q^n = \prod_{n\geq 1} \frac{(1-q^n)^3}{(1-q^{2n})^2} = q^{1/24}\frac{\eta(z)^3}{\eta(2z)^2}.
\end{equation*}
By Lemma~\ref{KOlemma}, when $\ell\geq 5$ this has a congruence at $a\mod\ell$ if and only
\begin{align*}
\left( q^{-1/24} \eta(2z)^{2}\eta(z)^{3(\ell-1)} \right)^{\ell^{2}-1}
\end{align*}
has a congruence at $a\mod\ell$.  This is equivalent to 
\begin{align*}
f:=\eta(z)^{3(\ell-1)(\ell^{2}-1)}\eta(2z)^{2(\ell^{2}-1)} \in M_{\frac{(\ell^2 - 1)(3\ell-1)}{2}}
\end{align*}
having a congruence at $a +\delta\mod\ell$ where we recall $\delta:=\frac{\ell^2 - 1}{24}$.  Since $f$ vanishes only at the cusps, by Proposition~\ref{voacprop}, $\omega(f) = \frac{(\ell^2 - 1)(3\ell-1)}{2} \equiv \frac{\ell+1}{2}\mod\ell.$

The fact that $\omega(f)\equiv \frac{\ell+1}{2} \mod\ell$ is unfortunate.  This is the only time that Corollary~\ref{unexcor} does not rule out congruences at $a\not\equiv 0\mod\ell$.  However, Lemma~\ref{unexlem} guarantees that if there is a congruence at $a\mod\ell$, then in fact there is a congruence at all $b\mod\ell$ such that $\leg{a+\delta}{\ell}=\leg{b+\delta}{\ell}$.

We now apply the method of Section~\ref{alg_section} to find all $\ell$ such that $f$ has a congruence at $0\mod\ell$.  Assume $f|U_{\ell}\equiv 0\mod\ell$.  Then $f$ is a low point of its Tate cycle and by Lemma~\ref{tate_cycle_basics_lemma}, the other low point has filtration $\omega(f) +2$.  Hence by Proposition~\ref{narrowingprop}, $(\Theta^{\frac{\ell+1}{2}}f)/f \in \overline{M}_{2}$.  Since
\begin{align*}
f &\equiv q^{\frac{\ell^{3}-\ell}{8}} \left( q^{\delta} \prod \frac{ (1-q^{n})^{3} }{(1-q^{2n})^{2}}\right) + O\left( q^{\ell + \delta + \frac{\ell^{3}-\ell}{8}} \right)\mod\ell,
\end{align*}
and since $\Theta$ is linear and satisfies the product rule, we obtain
\begin{align*}
\Theta^{\frac{\ell+1}{2}}f \equiv q^{\frac{\ell^{3}-\ell}{8}} \Theta^{\frac{\ell+1}{2}} \left( q^{\delta} \prod \frac{ (1-q^{n})^{3} }{(1-q^{2n})^{2}}\right)+ O\left( q^{\ell + \delta + \frac{\ell^{3}-\ell}{8}} \right)\mod\ell.
\end{align*}
Thus $(\Theta^{\frac{\ell+1}{2}}f)/f$ is congruent to 
\begin{align}
\Theta^{\frac{\ell+1}{2}} ( q^{\delta} &- 3q^{\delta +1} +2q^{\delta+2} + \cdots) \cdot \left(q^{\delta} - 3q^{\delta +1} +2q^{\delta+2} + \cdots\right)^{-1}\mod\ell\nonumber\\
\equiv \delta^{\frac{\ell+1}{2}} &+ \left( 3\delta^{\frac{\ell+1}{2}} -3(\delta+1)^{\frac{\ell+1}{2}}\right) q +\label{crankcong}\\
&  \left( 7\delta^{\frac{\ell+1}{2}} -9 (\delta+1)^{\frac{\ell+1}{2}} + 2(\delta+2)^{\frac{\ell+1}{2}}\right)q^{2}+\cdots \mod\ell.\nonumber
\end{align}
Since this is congruent to a weight two form, and since the basis form $F=q+4q^{3}+\cdots,$ lacks a $q^{2}$ term, we compare the coefficients of $q^{2}$ in $\delta^{\frac{\ell+1}{2}}E=\delta^{\frac{\ell+1}{2}}(1-q+24q^{2}+\cdots)$ and in Equation~(\ref{crankcong}) to deduce $24\delta^{\frac{\ell+1}{2}} \equiv 7\delta^{\frac{\ell+1}{2}} - 9 (\delta+1)^{\frac{\ell+1}{2}} + 2(\delta+2)^{\frac{\ell+1}{2}}\mod\ell$.  Multiplying by $24^{\frac{\ell+1}{2}}$, we find
\begin{align}
-17\leg{-1}{\ell} \equiv -207\leg{23}{\ell} + 94\leg{47}{\ell} \mod\ell.\label{crank leg eq}
\end{align}
That is, $17 \equiv \pm 207 \pm 94\mod\ell$.  The only possible $\ell\geq 5$ are $5,13, 53$ and $71$.  However, only $5$ and $53$ satisfy (\ref{crank leg eq}).  By the equivalences above, $f$ having a congruence at $0\mod\ell$ is equivalent to the crank difference function having a congruence at $a\mod\ell$ with $24a\equiv 1\mod\ell$.  For the primes $5$ and $53$, this means $a=4$ and $42$, respectively.  We have recovered the congruence at $4\mod 5$ of \cite{chokanlov2009a}.  Calculations reveal that the coefficient of $q^{42}$ precludes a congruence at $42\mod{53}$.  
\end{proof}

\begin{proof}[Proof of Theorem \ref{cphi2thm}]
Calculations show there is no congruence for $\ell=3$.  Thus we take $\ell\geq 5$ prime.  Equation~(10.6) of \cite{and1984a} says the generating function of $c\phi_{2}(n)$ is
\begin{equation*}
C\Phi_{2}(z)= \frac{\theta_{0}(z)}{q^{-1/12}\eta(z)^{2}}.
\end{equation*}
Now $C\Phi_{2}$ will have a congruence at $a\mod\ell$ if and only if $\left(q^{-1/12}\theta_{0}(z)^{\ell-1}\eta(z)^{2}\right)^{\ell^{2}-1}$ has a congruence at $a\mod\ell$.  This happens if and only if $f:=\theta_{0} (z)^{(\ell-1)(\ell^{2}-1)} \eta(z)^{2(\ell^{2}-1)}\in M_{(\ell-1)(\ell+1)^{2}/2}$ has a congruence at $a+2\delta \mod\ell$.  Since $f$ vanishes only at the cusps, Proposition~\ref{voacprop} implies $\omega(f) =\frac{(\ell-1)(\ell+1)^{2}}{2} \equiv \frac{\ell-1}{2}\mod\ell$.  By Corollary~\ref{unexcor}, there are no congruences at $a\not\equiv 0\mod\ell$ when $\ell\geq 5$.

Suppose $f$ has a congruence at $0\mod\ell$.  Then by Proposition~\ref{voacprop}, $f$ is a low point of its Tate cycle and by Lemma~\ref{tate_cycle_basics_lemma} the other low point has filtration $\omega(f)+4$.  Hence $(\Theta^{\frac{\ell+3}{2}}f)/f \in \overline{M}_{4}$ by Proposition~\ref{narrowingprop}.  We compute
\begin{align*}
f &\equiv q^{2\delta}\theta_{0}(z)\prod(1-q^{2n})^{-2} + O\left( q^{\ell + 2\delta} \right) \mod\ell\\
&\equiv q^{2\delta} +4q^{2\delta+1} + 9q^{2\delta+2} + 20q^{2\delta+3}+\cdots\mod\ell\\
f^{-1} &\equiv q^{-2\delta} -4q^{-2\delta+1} + 7q^{-2\delta+2} -12q^{-2\delta+3}+\cdots\mod\ell
\end{align*}
and
\begin{align*}
\Theta^{\frac{\ell+3}{2}}f \equiv (2\delta)^{\frac{\ell+3}{2}}q^{2\delta} +4 (2\delta+1)^{\frac{\ell+3}{2}}q^{2\delta+1} +9 (2\delta+2)^{\frac{\ell+3}{2}}q^{2\delta+2} +20 (2\delta+3)^{\frac{\ell+3}{2}}q^{2\delta+3} + \cdots \mod\ell.
\end{align*}
Hence we compute
\begin{align}
\left(\Theta^{\frac{\ell+3}{2}}f\right) f^{-1} \equiv 
(2&\delta)^{\frac{\ell+3}{2}} + \left( -4(2\delta)^{\frac{\ell+3}{2}} + 4(2\delta+1)^{\frac{\ell+3}{2}} \right)q\nonumber\\
&+ \left( 7(2\delta)^{\frac{\ell+3}{2}} -16(2\delta+1)^{\frac{\ell+3}{2}} +9(2\delta+2)^{\frac{\ell+3}{2}} \right)q^{2}\nonumber\\\label{cphi1}
&+ \left( -12(2\delta)^{\frac{\ell+3}{2}} +28(2\delta+1)^{\frac{\ell+3}{2}} -36(2\delta+2)^{\frac{\ell+3}{2}} +20(2\delta+3)^{\frac{\ell+3}{2}} \right)q^{3}\\&+\cdots \mod\ell.\nonumber
\end{align}
Recalling our basis (\ref{EFbasis}), we conclude
\begin{align}
\left(\Theta^{\frac{\ell+3}{2}}f\right) f^{-1} \equiv  
(2\delta)^{\frac{\ell+3}{2}} E^{2} &+ \left( 12(2\delta)^{\frac{\ell+3}{2}} + 4(2\delta+1)^{\frac{\ell+3}{2}} \right) EF\nonumber\\
&+ \left( -9(2\delta)^{\frac{\ell+3}{2}} + 16(2\delta+1)^{\frac{\ell+3}{2}} + 9(2\delta+2)^{\frac{\ell+3}{2}} \right) F^{2}\label{cphi2}.
\end{align}
Multiplying the coefficients of $q^{3}$ in both (\ref{cphi1}) and (\ref{cphi2})  by $12^{\frac{\ell+3}{2}}$ leads to
\begin{align}
0 &\equiv 100(-1)^{\frac{\ell+3}{2}} -84(11)^{\frac{\ell+3}{2}}
-36(23)^{\frac{\ell+3}{2}} +20(35)^{\frac{\ell+3}{2}} \mod\ell\nonumber\\
&\equiv 100 \leg{-1}{\ell} - 10164 \leg{11}{\ell} -19044\leg{23}{\ell} +24500\leg{35}{\ell}\mod\ell\label{cphi2 leq eq}\\
&\equiv \pm100 \pm 10164 \pm 19044 \pm 24500 \mod \ell.\label{cphi2 pm eq}
\end{align}
The only primes $\ell\geq 5$ satisfying (\ref{cphi2 pm eq}) are $5, 13, 19, 31, 59, 97, 131, 601$, and $6701$.  It is easily checked that only $\ell=5$ satisfies (\ref{cphi2 leq eq}).  That is, we have recovered the congruence (\ref{andrews2}) and proved there are no others.
\end{proof}

\bibliographystyle{plain}
\bibliography{/Users/Michael/Documents/bibliog}

\begin{thebibliography}{10}

\bibitem{ahlboy2003a}
Scott Ahlgren and Matthew Boylan.
\newblock Arithmetic properties of the partition function.
\newblock {\em Invent. Math.}, 153(3):487--502, 2003.

\bibitem{ahlchorou2009a}
Scott Ahlgren, Dohoon Choi, and Jeremy Rouse.
\newblock Congruences for level four cusp forms.
\newblock {\em Math. Res. Lett.}, to appear.

\bibitem{and1984a}
George~E. Andrews.
\newblock Generalized {F}robenius partitions.
\newblock {\em Mem. Amer. Math. Soc.}, 49(301):iv+44, 1984.

\bibitem{atkgar2003a}
A.~O.~L. Atkin and Frank~G. Garvan.
\newblock Relations between the ranks and cranks of partitions.
\newblock {\em Ramanujan J.}, 7(1-3):343--366, 2003.
\newblock Rankin memorial issues.

\bibitem{boy2004a}
Matthew Boylan.
\newblock Exceptional congruences for powers of the partition function.
\newblock {\em Acta Arith.}, 111(2):187--203, 2004.

\bibitem{chokanlov2009a}
Dohoon Choi, Soon-Yi Kang, and Jeremy Lovejoy.
\newblock Partitions weighted by the parity of the crank.
\newblock {\em J. Combin. Theory Ser. A}, to appear.

\bibitem{corlov2004a}
Sylvie Corteel and Jeremy Lovejoy.
\newblock Overpartitions.
\newblock {\em Trans. Amer. Math. Soc.}, 356(4):1623--1635 (electronic), 2004.

\bibitem{eicsel2002a}
Dennis Eichhorn and James~A. Sellers.
\newblock Computational proofs of congruences for 2-colored {F}robenius
  partitions.
\newblock {\em Int. J. Math. Math. Sci.}, 29(6):333--340, 2002.

\bibitem{gro1990a}
Benedict~H. Gross.
\newblock A tameness criterion for {G}alois representations associated to
  modular forms (mod {$p$}).
\newblock {\em Duke Math. J.}, 61(2):445--517, 1990.

\bibitem{joc1982a}
Naomi Jochnowitz.
\newblock A study of the local components of the {H}ecke algebra mod {$l$}.
\newblock {\em Trans. Amer. Math. Soc.}, 270(1):253--267, 1982.

\bibitem{kim2008b}
Byungchan Kim.
\newblock The overpartition function modulo 128.
\newblock {\em Integers}, 8:A38, 8, 2008.

\bibitem{kimols1992a}
Ian Kiming and J{\o}rn~B. Olsson.
\newblock Congruences like {R}amanujan's for powers of the partition function.
\newblock {\em Arch. Math. (Basel)}, 59(4):348--360, 1992.

\bibitem{lov2000a}
Jeremy Lovejoy.
\newblock Ramanujan-type congruences for three colored {F}robenius partitions.
\newblock {\em J. Number Theory}, 85(2):283--290, 2000.

\bibitem{mah2004a}
Karl Mahlburg.
\newblock The overpartition function modulo small powers of 2.
\newblock {\em Discrete Math.}, 286(3):263--267, 2004.

\bibitem{mah2005a}
Karl Mahlburg.
\newblock Partition congruences and the {A}ndrews-{G}arvan-{D}yson crank.
\newblock {\em Proc. Natl. Acad. Sci. USA}, 102(43):15373--15376 (electronic),
  2005.

\bibitem{ono1996a}
Ken Ono.
\newblock Congruences for {F}robenius partitions.
\newblock {\em J. Number Theory}, 57(1):170--180, 1996.

\bibitem{ono2000a}
Ken Ono.
\newblock Distribution of the partition function modulo {$m$}.
\newblock {\em Ann. of Math. (2)}, 151(1):293--307, 2000.

\bibitem{ono2004a}
Ken Ono.
\newblock {\em The web of modularity: arithmetic of the coefficients of modular
  forms and {$q$}-series}, volume 102 of {\em CBMS Regional Conference Series
  in Mathematics}.
\newblock Published for the Conference Board of the Mathematical Sciences,
  Washington, DC, 2004.

\bibitem{paurad2009a}
Peter Paule and Silviu Radu.
\newblock A proof of {S}eller's conjecture.
\newblock {\em Preprint}, 2009.

\bibitem{sin2009b}
Jonah Sinick.
\newblock Ramanujan congruences for a class of eta quotients.
\newblock {\em Preprint}, 2009.

\bibitem{stu1987a}
Jacob Sturm.
\newblock On the congruence of modular forms.
\newblock In {\em Number theory ({N}ew {Y}ork, 1984--1985)}, volume 1240 of
  {\em Lecture Notes in Math.}, pages 275--280. Springer, Berlin, 1987.

\bibitem{swi1973a}
H.~P.~F. Swinnerton-Dyer.
\newblock On {$l$}-adic representations and congruences for coefficients of
  modular forms.
\newblock In {\em Modular functions of one variable, {III} ({P}roc. {I}nternat.
  {S}ummer {S}chool, {U}niv. {A}ntwerp, 1972)}, pages 1--55. Lecture Notes in
  Math., Vol. 350. Springer, Berlin, 1973.

\bibitem{tup2006a}
Alexandru Tupan.
\newblock Congruences for {$\Gamma\sb 1(4)$}-modular forms of half-integral
  weight.
\newblock {\em Ramanujan J.}, 11(2):165--173, 2006.

\end{thebibliography}
\end{document}